\crefname{equation}{}{}
\newtheorem{theorem}{Theorem}
\newtheorem{lemma}{Lemma}
\newtheorem{proposition}{Proposition}
\newtheorem{corollary}{Corollary}
\theoremstyle{remark}
\theoremstyle{remark}\newtheorem{example}{Example}
\def\RR{\mathbb{R}}
\def\XX{\mathbb{X}}
\def\YY{\mathbb{Y}}
\def\dist{{\rm dist}}
\def\calA{{\cal A}}
\def\calI{{\cal I}}
\def\calJ{{\cal J}}
\def\calK{{\cal K}}
\def\bfa{\mathbf{a}}
\def\bfc{\mathbf{c}}
\def\bfe{\mathbf{e}}
\def\bfu{\mathbf{u}}
\def\bfv{\mathbf{v}}
\def\bfw{\mathbf{w}}
\def\bfx{\mathbf{x}}
\def\bfy{\mathbf{y}}
\def\bfz{\mathbf{z}}
\def\bfs{\mathbf{s}}
\def\bft{\mathbf{t}}
\newcommand{\spn}{{\rm span}}
\newcommand{\extreme}[1]{\hat{#1}}
\begin{document}

\title{Best low-rank approximations and Kolmogorov $n$-widths}
\author[1]{Michael S. Floater\thanks{michaelf@math.uio.no}}
\author[2]{Carla Manni\thanks{manni@mat.uniroma2.it}}
\author[2]{Espen Sande\thanks{sande@mat.uniroma2.it}}
\author[2]{\\ Hendrik Speleers\thanks{speleers@mat.uniroma2.it}}

\affil[1]{\small Department of Mathematics, University of Oslo, Norway}
\affil[2]{\small Department of Mathematics, University of Rome Tor Vergata, Italy}
\maketitle

\begin{abstract}
We relate the problem of best low-rank approximation in the spectral norm for a matrix $A$ to Kolmogorov $n$-widths and corresponding optimal spaces. We characterize all the optimal spaces for the image of the Euclidean unit ball under $A$ and we show that any orthonormal basis in an $n$-dimensional optimal space generates a best rank-$n$ approximation to $A$. We also present a simple and explicit construction to obtain a sequence of optimal $n$-dimensional spaces once an initial optimal space is known. This results in a variety of solutions to the best low-rank approximation problem and provides alternatives to the truncated singular value decomposition. This variety can be exploited to obtain best low-rank approximations with problem-oriented properties.
\end{abstract}


\section{Introduction}\label{sec:introduction}
The problem of approximating a given matrix by another matrix of a lower rank is labeled as the problem of low-rank approximation (of matrices). It aims to obtain a more compact representation
of data with limited loss of information. Low-rank approximation of matrices is ubiquitous in applications: discretization of partial differential equations, principal component analysis, image processing, data mining, and machine learning, to name a few; see, e.g., \cite{Kumar:2017} for a survey. In particular, it plays an important role in matrix completion \cite{Cai:2010}, which finds in the so-called \emph{Netflix problem} one of its most well-known applications \cite{Hallinan:2016}.

In this paper we consider the classical problem of \emph{best} low-rank approximation of matrices measured in the spectral norm.
Let $A$ be an $m\times m$ real matrix of rank $r$, then we seek rank-$n$ matrices $R_n$, $n<r$, such that
\begin{align*}
\|A-R_n\|\leq \|A-B\|,
\end{align*}
for any $m\times m$ matrix $B$ of rank $n$, and where $\|\cdot\|$ is the operator norm induced by the Euclidean norm, i.e., the spectral norm.

The singular value decomposition (SVD) is an essential  
tool for analyzing and solving the best low-rank approximation problem; see, e.g., \cite[Chapter~3]{Blum:2020}.
Let $A=U \Sigma V^T$ be any SVD of $A$, i.e., $\Sigma$ is the diagonal matrix whose diagonal entries, 
$$
\sigma_1\geq \sigma_2\geq\dots\geq \sigma_r>\sigma_{r+1}=\dots= \sigma_m=0,
$$ 
are the singular values of $A$, and $U$ and $V$ are orthonormal matrices. We further let $\bfu_j$ and $\bfv_j$ denote the $j$-th column vector of $U$ and $V$. If $n<r$, then the Eckhart--Young theorem \cite[Theorem~2.4.8]{Golub:2013} states that the rank-$n$ matrix
\begin{equation}\label{eq:ba-SVD}
R_n=\sum_{i=1}^n\sigma_i\bfu_i\bfv_i^T
\end{equation}
satisfies
\begin{equation}\label{eq:rank-n-approx-gen}
\|A-R_n\|=\min_{\operatorname{rank}(B)=n} \|A-B\|=\sigma_{n+1},
\end{equation}
and is thus a best rank-$n$ approximation to $A$ in the spectral norm. However, in many applications one is interested in finding low-rank approximations that preserve certain \emph{structures} in the original matrix $A$, i.e., structured low-rank approximation \cite{Chu:2003,Higham:1989,Markovsky:2008,Ottaviani:2014, Ishteva:2014,Grussler:2018}. Preserving these structures could exclude the matrix $R_n$ in \cref{eq:ba-SVD} from being a suitable approximation, and in general one looks for \emph{near-best} approximations that preserve these structures. In this paper we provide a classification of other best low-rank approximations to $A$ than $R_n$ in \cref{eq:ba-SVD}. One could then search among these matrices for best low-rank approximations that have the desired structure or other problem-oriented properties. 
In fact, the special case of best rank-$1$ approximations to Hankel matrices has already been considered in \cite{Antoulas:1997}; see also \cite{Knirsch:preprint} where further results and efficient algorithms for structured best rank-$1$ approximations to Hankel matrices can be found. We also remark that the problem of finding best low-rank approximations in other (entry-wise) matrix norms has been studied in \cite{Pinkus:2012} and \cite{Georgieva:2017}.

Observe that the matrix $R_n$ in \cref{eq:ba-SVD} is clearly not unique if $\sigma_n=\sigma_{n+1}>0$ and it is then straightforward to find other best rank-$n$ approximations to $A$. 
If
 $\sigma_n>\sigma_{n+1}>0$
it is known that the matrix in \cref{eq:ba-SVD} is the unique best rank-$n$ approximation to $A$ in the Frobenius norm; see, e.g., \cite[Section~7.4.2]{Horn:2013}.
However, as argued by Tropp \cite[p.~122]{Tropp:2015}, error bounds in the Frobenius norm are not always useful in cases of practical interest and can even be completely ``vacuous''; see also \cite{Li:2017,Musco:2015} for a similar argument. It is therefore more desirable to look for low-rank approximations in the spectral norm. For this norm
the problem has infinitely many solutions whenever $\sigma_{n+1}>0$, because any matrix of the form
\begin{equation}
\label{eq:no-unique-spectral}
\sum_{i=1}^n(\sigma_i+\epsilon_i)\bfu_i\bfv_i^T, \quad -\sigma_{n+1}\leq\epsilon_i\leq\sigma_{n+1},
\end{equation}
solves \cref{eq:rank-n-approx-gen}.
In this paper we look for more general solutions of the form $\sum_{i=1}^n\bfx_i\bfy_i^T$ with $\bfx_i,$ $\bfy_i\in \mathbb{R}^m$, other than \cref{eq:ba-SVD} and \cref{eq:no-unique-spectral}, to the best low-rank approximation problem in \cref{eq:rank-n-approx-gen}. 

Our approach to finding other best rank-$n$ approximations to $A$ consists of two steps: first we relate this problem to Kolmogorov $n$-widths \cite{Kolmogorov:36} and then we solve the $n$-width problem.
The Kolmogorov $n$-width of a set in a normed linear space is the minimal distance to the given set from all possible $n$-dimensional subspaces. An $n$-dimensional (sub)space is \emph{optimal} when it realizes this minimal distance.
We provide a classification of all the optimal $n$-dimensional spaces for the image of the Euclidean unit ball under $A$, which can be recognized as an $r$-dimensional ellipsoid in $\mathbb{R}^m$. It turns out that the corresponding Kolmogorov $n$-width equals $\sigma_{n+1}$ and that any orthonormal basis in such $n$-dimensional optimal space generates a best rank-$n$ approximation to $A$. This results in a large variety of best rank-$n$ approximations beyond the truncated SVD solution in \cref{eq:ba-SVD}, and can be exploited to obtain low-rank approximations with problem-oriented properties.

As a byproduct of our results we classify all $n$-dimensional spaces that achieve the minimum in the following min-max formulation for the singular values of~$A$:
\begin{equation}\label{eq:min-max}
\sigma_{n+1} = \min_{\XX_n}\max_{\bfz\perp \XX_n} \sqrt{\frac{\bfz^TAA^T\bfz}{\bfz^T\bfz}}.
\end{equation}
This formula is a direct consequence of the Courant--Fischer theorem \cite[Section~7.3]{Horn:2013}.
It is easily verified that $\XX_n=\spn\{\bfu_1,\ldots,\bfu_n\}$ achieves the minimum in \cref{eq:min-max}. However, as already pointed out in \cite{Karlovitz:73,Karlovitz:76}, this space is unique only in very special cases.

For further relations between the $n$-width and matrix theory we refer the reader to the survey paper \cite{Pinkus:79}, and for further $n$-width results in general to the book \cite{Pinkus:85}.

In this paper we restrict our attention to the case where the $(n+1)$-st singular value is non-zero and unique, i.e.,
\begin{equation}
\label{eq:sigmas-unique-weak}
\sigma_n>\sigma_{n+1}>\sigma_{n+2}\geq0.
\end{equation}
Besides the above discussion, this assumption is taken to simplify the exposition since it ensures that the $(n+1)$-st left singular vector of $A$ is unique (up to multiplication by constants). All our findings can be easily extended to rectangular matrices $A$ of rank $r$.

The remainder of this paper is organized as follows. \Cref{sec:low-rank} states the definitions of Kolmogorov $n$-widths and optimal spaces for the image of the Euclidean unit ball by $A$ and connects them with best rank-$n$ approximations to $A$. Some known necessary or sufficient conditions for a subspace to be optimal are recalled in \cref{sec:optimal-subspaces}. \Cref{sec:optimality-crit} is the core of the paper and provides characterizations of optimal subspaces by means of some optimality criteria. We discuss them in detail for the important case of best rank-$1$ approximation in \cref{sec:1-width}. Some alternative optimality criteria are collected in \cref{sec:optimality-crit-alt}. \Cref{sec:tp-matrices,sec:seq-subspaces}, inspired by similar results for integral operators in $L^2$, present a simple explicit construction to obtain a sequence of optimal $n$-dimensional subspaces once an initial optimal subspace is given. This construction can be exploited to obtain alternative best rank-$n$ spectral approximations for any matrix $A$. Some concluding remarks are collected in \cref{sec:conclusion}.

\section{Kolmogorov $n$-widths and rank-$n$ approximations}\label{sec:low-rank}
Let $A$ be an $m \times m$ real matrix of rank $r$, and define the subset of $\RR^m$,
\begin{align*}
\calA &:= \{A\bfx : \bfx \in \RR^m,\, \|\bfx\| \le 1\}, 
\end{align*}
where $\| \cdot \|$ is the Euclidean norm in $\RR^m$.
Note that $\calA$ can be recognized as a (filled)
$r$-dimensional ellipsoid in $\mathbb{R}^m$, 
where the line segments $[-\sigma_i \bfu_i, \sigma_i \bfu_i]$, $i=1,\ldots,r$, are its principal axes.
The spectral norm of $A$ is the induced operator norm given by
\begin{align*}
\|A\|:=\max_{\|\bfx\|\leq1} \|A\bfx\|,
\end{align*}
and it can be shown that $\|A\|=\|A^T\|=\sigma_1$.
For an $n$-dimensional subspace $\XX_n$ of $\RR^m$, where $0 \le n \le m$,
we define the distance to $\calA$ from $\XX_n$ by
\begin{equation}\label{eq:E}
 E(\calA, \XX_n) :=\max_{\bfa \in \calA} \dist(\bfa,\XX_n)= \max_{\bfa \in \calA} \min_{\bfx \in \XX_n} \|\bfa-\bfx\|. 
\end{equation}
Then, the Kolmogorov $n$-width of $\calA$, relative to the Euclidean norm in $\RR^m$, is defined by
$$ d_n(\calA) := \min_{\XX_n} E(\calA, \XX_n). $$
A subspace $\XX_n$ of $\RR^m$ is called an optimal subspace for $\calA$
provided that
$$E(\calA, \XX_n) = d_n(\calA). $$
Here the $0$-dimensional subspace $\XX_0$ of $\RR^m$ is $\{0\}$. 

We can determine the $n$-width of $\calA$ for any $n=0,\ldots,m$ as follows. Let $P_n$ be the orthogonal projection onto $\XX_n$. Then,
\begin{equation}\label{eq:max0}
\begin{aligned}
E(\calA, \XX_n) &=  \max_{\bfa \in \calA} \|\bfa-P_n\bfa\| = \max_{\|\bfx\| \le 1} \|(I-P_n)A\bfx\|=\|(I-P_n)A\|\\
&=\|A^T(I-P_n)\|
=\max_{\bfx\neq 0} \frac{\|A^T(I-P_n)\bfx\|}{\|\bfx\|},
\end{aligned}
\end{equation}
where we have used that the spectral norm of a matrix equals the spectral norm of its adjoint. By letting $\bfx=\bfy\oplus\bfz$ for $\bfy\in\XX_n$ and $\bfz\perp\XX_n$ one can check that the last maximum in \eqref{eq:max0} is achieved for $\bfy=0$. This implies that
\begin{equation}\label{eq:max}
 E(\calA, \XX_n) = \max_{\bfz\perp \XX_n} \frac{\|A^T\bfz\|}{\|\bfz\|}
 = \max_{\bfz\perp \XX_n} \sqrt{\frac{\bfz^TAA^T\bfz}{\bfz^T\bfz}}.
\end{equation}
Now, using the definition of $d_n(\calA)$, together with \cref{eq:min-max} and \cref{eq:max}, we observe that
\begin{equation}\label{eq:width=sv}
d_n(\calA) = \sigma_{n+1}, \quad n=0,1,\ldots, m-1.
\end{equation}
We also note that it easily follows
from the definition of the $n$-width that $d_m(\calA) = 0$,
due to the fact that the only choice of a subspace of $\RR^m$ of
dimension $m$ is $\XX_m = \RR^m$.
Thus, we have
$$ (d_0(\calA),d_1(\calA),\ldots, d_m(\calA))
   = (\sigma_1,\sigma_2,\ldots,\sigma_m,0), $$
and, as mentioned in the introduction, $\XX_n=\spn\{\bfu_1,\ldots,\bfu_n\}$ is an optimal space for $\calA$.

The relation between Kolmogorov $n$-widths and rank-$n$ approximations is contained in the next two theorems. 

\begin{theorem}\label{thm:low-rank-1} 
Assume that the vectors $\bfx_i$, $i=1,\ldots,n$, are orthonormal, and define $\bfy_i:=A^T\bfx_i$, $i=1,\ldots,n$. If $\XX_n:=\spn\{\bfx_1,\ldots,\bfx_n\}$, then
\begin{equation*}
\|A-\sum_{i=1}^n\bfx_i\bfy_i^T\| = E(\calA,\XX_n),
\end{equation*}
and, consequently, the matrix $\sum_{i=1}^n\bfx_i\bfy_i^T$ is a best rank-$n$ approximation to $A$ if and only if the subspace $\XX_n$ is optimal for $\calA$.
\end{theorem}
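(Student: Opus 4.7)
The plan is to reduce the matrix $\sum_{i=1}^n \bfx_i \bfy_i^T$ to the composition of $A$ with the orthogonal projection onto $\XX_n$, and then invoke the identity $E(\calA,\XX_n)=\|(I-P_n)A\|$ already derived in \cref{eq:max0}.

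First I would observe that because the $\bfx_i$ are orthonormal, $P_n:=\sum_{i=1}^n\bfx_i\bfx_i^T$ is precisely the orthogonal projection onto $\XX_n$. Substituting $\bfy_i^T=\bfx_i^T A$ gives
\[
\sum_{i=1}^n\bfx_i\bfy_i^T=\sum_{i=1}^n\bfx_i\bfx_i^TA=P_nA,
\]
so $A-\sum_{i=1}^n\bfx_i\bfy_i^T=(I-P_n)A$. Taking spectral norms and invoking \cref{eq:max0} immediately yields
\[
\Bigl\|A-\sum_{i=1}^n\bfx_i\bfy_i^T\Bigr\|=\|(I-P_n)A\|=E(\calA,\XX_n),
\]
which is the first claim.

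For the equivalence in the second claim, I would argue as follows. The matrix $\sum_{i=1}^n\bfx_i\bfy_i^T$ has rank at most $n$, so by \cref{eq:rank-n-approx-gen} it is a best rank-$n$ approximation to $A$ exactly when its spectral error equals $\sigma_{n+1}$. By the first part that error equals $E(\calA,\XX_n)$, and by \cref{eq:width=sv} we have $\sigma_{n+1}=d_n(\calA)$. Hence $\sum_{i=1}^n\bfx_i\bfy_i^T$ is a best rank-$n$ approximation if and only if $E(\calA,\XX_n)=d_n(\calA)$, i.e., $\XX_n$ is optimal for $\calA$.

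There is no real obstacle: once the identity $\sum_{i=1}^n\bfx_i\bfy_i^T=P_nA$ is recognised, the theorem is a direct restatement of the computation leading to \cref{eq:max} combined with the identification $d_n(\calA)=\sigma_{n+1}$. The only point that deserves a word of care is confirming that the rank-$n$ minimiser indeed achieves $\sigma_{n+1}$ (so that the "only if" direction is legitimate), but this is exactly the content of \cref{eq:rank-n-approx-gen}.
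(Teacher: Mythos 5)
Your proof is correct and takes essentially the same route as the paper: the paper likewise identifies $\sum_{i=1}^n\bfx_i\bfy_i^T$ with $P_nA$ (written out through its action on unit vectors) and then applies \cref{eq:max0} together with $d_n(\calA)=\sigma_{n+1}$. Your explicit justification of the ``consequently'' step via \cref{eq:rank-n-approx-gen} is just an unpacking of the paper's closing sentence, so there is no substantive difference.
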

\begin{proof}
Let $P_n$ be the orthogonal projection onto $\XX_n$. It follows from \cref{eq:max0} that
\begin{align*}
E(\calA,\XX_n) &=\max_{\|\bfx\|= 1} \|A\bfx- P_nA\bfx\|
=\max_{\|\bfx\|= 1} \|A\bfx- \sum_{i=1}^n(\bfx_i^TA\bfx)\bfx_i\| 
\\
&=\max_{\|\bfx\|= 1} \|A\bfx- \sum_{i=1}^n((A^T\bfx_i)^T\bfx)\bfx_i\| 
= \max_{\|\bfx\|= 1} \|A\bfx- \sum_{i=1}^n\bfx_i\bfy_i^T\bfx\|
\\
&=\|A-\sum_{i=1}^n\bfx_i\bfy_i^T\|.
\end{align*}
Since $d_n(\calA)=\sigma_{n+1}$, the result follows.
\end{proof}

We remark that the above theorem can be considered as an extension of an observation in \cite{Pinkus:2012}. 
Define the subset $\calA_T:= \{A^T\bfx : \bfx \in \RR^m,\, \|\bfx\| \le 1\}$ and observe that $d_n(\calA_T)=\sigma_{n+1}$ since $A^T$ has the same singular values as $A$.
The following result can be obtained by a similar argument as for \cref{thm:low-rank-1}.
\begin{theorem}\label{thm:low-rank-2}
Assume that the vectors $\bfy_i$, $i=1,\ldots,n$, are orthonormal, and define $\bfx_i:=A\bfy_i$, $i=1,\ldots,n$.
If $\YY_n:=\spn\{\bfy_1,\ldots,\bfy_n\}$, then
\begin{equation*}
\|A-\sum_{i=1}^n\bfx_i\bfy_i^T\| = E(\calA_T,\YY_n),
\end{equation*}
and, consequently, the matrix $\sum_{i=1}^n\bfx_i\bfy_i^T$ is a best rank-$n$ approximation to $A$ if and only if the subspace $\YY_n$ is optimal for $\calA_T$.
\end{theorem}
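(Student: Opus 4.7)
The plan is to mirror the proof of \cref{thm:low-rank-1} with the roles of $A$ and $A^T$ interchanged. Since $A$ and $A^T$ share the same singular values, the equality $d_n(\calA_T) = \sigma_{n+1}$ is immediate, so the iff characterisation of best rank-$n$ approximations will again follow directly once the first identity $\|A - \sum_{i=1}^n \bfx_i\bfy_i^T\| = E(\calA_T, \YY_n)$ is established.

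To prove that identity, I would apply the analogue of \cref{eq:max0} for $\calA_T$ and $\YY_n$. Letting $Q_n$ denote the orthogonal projection onto $\YY_n$, this gives $E(\calA_T, \YY_n) = \|(I-Q_n)A^T\|$, which by invariance of the spectral norm under transposition equals $\|A(I-Q_n)\|$. Expanding $Q_n\bfx = \sum_{i=1}^n (\bfy_i^T\bfx)\bfy_i$ via the orthonormality of $\bfy_1,\ldots,\bfy_n$, and using $A\bfy_i = \bfx_i$, I obtain
\begin{equation*}
A(I-Q_n) = A - \sum_{i=1}^n \bfx_i\bfy_i^T,
\end{equation*}
which yields the desired identity.

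Equivalently, the whole statement can be deduced as an immediate corollary of \cref{thm:low-rank-1} applied to the matrix $A^T$: the orthonormal vectors play the role of the $\bfx_i$ in that theorem, and $(A^T)^T\bfy_i = A\bfy_i = \bfx_i$ plays the role of the $\bfy_i$ there, so \cref{thm:low-rank-1} gives $\|A^T - \sum_{i=1}^n \bfy_i\bfx_i^T\| = E(\calA_T, \YY_n)$, and transposing the matrix inside the norm on the left concludes the argument.

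I do not foresee a significant obstacle, since the proof is formally symmetric to that of \cref{thm:low-rank-1}. The only point requiring mild care is keeping track of which side the projection acts on, i.e., that $(I-Q_n)$ must sit to the right of $A$ rather than to the left as $(I-P_n)$ does in the proof of \cref{thm:low-rank-1}; everything else is a routine transcription.
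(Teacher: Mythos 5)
Your proof is correct and coincides with the paper's intended argument: the paper simply notes that \cref{thm:low-rank-2} follows ``by a similar argument as for \cref{thm:low-rank-1}'', and your transcription with $A$ and $A^T$ interchanged, using $E(\calA_T,\YY_n)=\|(I-Q_n)A^T\|=\|A(I-Q_n)\|$ and $AQ_n=\sum_{i=1}^n\bfx_i\bfy_i^T$, is exactly that argument. Your alternative derivation as a corollary of \cref{thm:low-rank-1} applied to $A^T$ is an equally valid packaging of the same idea.
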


We remark that if $\XX_n$ is an optimal subspace for $\calA$ then it follows from the results of \cref{sec:seq-subspaces} that $A^T(\XX_n)=\spn\{A^T\bfx_1,\ldots,A^T\bfx_n\}$ is an optimal space for $\calA_T$. Thus, the $\bfy_i$, $i=1,\ldots,n$, in \cref{thm:low-rank-1} span an optimal space for $\calA_T$ whenever $\XX_n$ is optimal for $\calA$. A similar observation holds for \cref{thm:low-rank-2} and we refer the reader to \cref{sec:seq-subspaces} for the details.

The classical truncated SVD approximation to $A$ can be recovered by taking either $\bfx_i=\bfu_i$, $i=1,\ldots,n$, in \cref{thm:low-rank-1} or $\bfy_i=\bfv_i$, $i=1,\ldots,n$, in \cref{thm:low-rank-2}.
From the above theorems we observe that a classification of all the optimal spaces for $\calA$ and $\calA_T$ leads to a classification of several best low-rank approximations to $A$. Such a classification is the goal of the remainder of this paper.

Equivalence between best rank-$n$ approximation and optimality of the corresponding subspaces for the Kolmogorov $n$-width has been shown under the assumptions of either \cref{thm:low-rank-1} or \cref{thm:low-rank-2} (see also \cref{pro:n=1_symmetry}). It is an open question whether this equivalence holds more generally.

\section{Optimal subspaces}\label{sec:optimal-subspaces}
Let us start searching for optimal subspaces for $\calA$. 
From now on we assume that the singular values of $A=U \Sigma V^T$ satisfy \cref{eq:sigmas-unique-weak}.
Here we recall some optimality conditions from Karlovitz \cite{Karlovitz:76}. The following condition is necessary for the optimality of a subspace; see \cite[Theorem~1]{Karlovitz:76} for a proof.
\begin{theorem}\label{thm:Karlovitz-nec}
Given $n<r$, if  $\XX_n$ is an optimal subspace for $\calA$, then $\XX_n \perp \bfu_{n+1}$.
\end{theorem}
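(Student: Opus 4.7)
The plan is to leverage the characterization of $E(\calA,\XX_n)$ as a Rayleigh quotient supremum, together with a pigeonhole-style dimension count against the span of the top $n+1$ left singular vectors. Recall from \cref{eq:max} and \cref{eq:width=sv} that $\XX_n$ is optimal precisely when
\begin{equation*}
\max_{\bfz\perp\XX_n}\frac{\bfz^T A A^T\bfz}{\bfz^T\bfz}=\sigma_{n+1}^2.
\end{equation*}
I will use this upper bound on one side and produce a competitor $\bfz\in\XX_n^{\perp}$ supported on $\spn\{\bfu_1,\ldots,\bfu_{n+1}\}$ that forces a contradiction unless $\bfu_{n+1}\perp\XX_n$.

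First I would note that $\dim\XX_n^{\perp}=m-n$ and $\dim\spn\{\bfu_1,\ldots,\bfu_{n+1}\}=n+1$, so their intersection has dimension at least $(m-n)+(n+1)-m=1$. Pick any nonzero vector in the intersection and write it as $\bfz=\sum_{i=1}^{n+1}c_i\bfu_i$. Since $AA^T\bfu_i=\sigma_i^2\bfu_i$ and the $\bfu_i$ are orthonormal,
\begin{equation*}
\frac{\bfz^T A A^T\bfz}{\bfz^T\bfz}=\frac{\sum_{i=1}^{n+1}c_i^2\sigma_i^2}{\sum_{i=1}^{n+1}c_i^2}\ \ge\ \sigma_{n+1}^2,
\end{equation*}
the last step being a convex-combination bound using the ordering of the singular values.

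Combining this with the optimality inequality $\bfz^T A A^T\bfz/(\bfz^T\bfz)\le\sigma_{n+1}^2$ forces equality in the convex combination. Here the gap hypothesis $\sigma_n>\sigma_{n+1}$ from \cref{eq:sigmas-unique-weak} does the essential work: equality requires $c_i=0$ for every index $i\le n$, so $\bfz=c_{n+1}\bfu_{n+1}$ with $c_{n+1}\ne 0$. Hence $\bfu_{n+1}\in\XX_n^{\perp}$, which is the claim.

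The only delicate point I anticipate is the step that rules out contributions from $\bfu_1,\ldots,\bfu_n$ in $\bfz$. A weighted convex combination attains the minimum weight only when all mass sits on indices achieving the minimum, and the strict inequality $\sigma_n>\sigma_{n+1}$ is exactly what guarantees that $\sigma_{n+1}^2$ is not shared by any of the larger singular values; without this gap one could only conclude $\bfz\in\spn\{\bfu_i:\sigma_i=\sigma_{n+1}\}$. Everything else in the argument is clean dimension counting and the Rayleigh quotient reformulation already established in \cref{eq:max}, so the proof should be short.
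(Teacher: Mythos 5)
Your argument is correct. Note that the paper does not prove this statement itself; it simply defers to Karlovitz \cite[Theorem~1]{Karlovitz:76}, so what you have written is a self-contained replacement rather than a variant of an in-paper proof. Your route is the natural one given the machinery already set up in \cref{sec:low-rank}: optimality plus \cref{eq:max} and \cref{eq:width=sv} bounds the Rayleigh quotient of $AA^T$ by $\sigma_{n+1}^2$ on $\XX_n^\perp$, the dimension count $\dim\bigl(\XX_n^\perp\cap\spn\{\bfu_1,\ldots,\bfu_{n+1}\}\bigr)\ge 1$ produces a competitor, and since $AA^T\bfu_i=\sigma_i^2\bfu_i$ the two bounds force all coefficients on $\bfu_1,\ldots,\bfu_n$ to vanish, leaving a nonzero multiple of $\bfu_{n+1}$ in $\XX_n^\perp$. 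You also correctly flag that the strict gap $\sigma_n>\sigma_{n+1}$ from the standing assumption \cref{eq:sigmas-unique-weak} is what upgrades the conclusion from ``orthogonal to some vector in the singular subspace of $\sigma_{n+1}$'' to orthogonality to $\bfu_{n+1}$ itself; this matches the paper's remark after the theorem that with repeated singular values one only gets orthogonality to a certain span of left singular vectors, which is the general form of Karlovitz's result. The one presentational point worth making explicit is that the theorem is stated under the assumption \cref{eq:sigmas-unique-weak} announced at the start of \cref{sec:optimal-subspaces}, so invoking $\sigma_n>\sigma_{n+1}$ is legitimate and not an extra hypothesis you are smuggling in.
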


As mentioned in the introduction, under the assumption \cref{eq:sigmas-unique-weak} the left singular vector $\bfu_{n+1}$ is unique (up to multiplication by constants).
In general, if there are multiple equal singular values for $A$, then an optimal subspace $\XX_n$ must be orthogonal to a certain subspace spanned by the left singular vectors of $A$; see \cite[Theorem~1]{Karlovitz:76} for the details.

Note that in the special case $r=m$ and $n=m-1$, \cref{thm:Karlovitz-nec} implies the uniqueness of the optimality of
\begin{equation}\label{eq:optimal:N-1}
\XX_{m-1}=\spn\{\bfu_1,\ldots,\bfu_{m-1}\}.
\end{equation}

In addition to the necessary condition in \cref{thm:Karlovitz-nec}, Karlovitz also proved a sufficient condition for optimality. Roughly speaking, it states that any subspace ``sufficiently close'' to the optimal space $\spn\{\bfu_1,\ldots\bfu_n\}$ must be optimal whenever it satisfies the necessary condition of \cref{thm:Karlovitz-nec}. The precise condition is stated in the following theorem; see \cite[Theorem~1]{Karlovitz:76} for a proof.
\begin{theorem}\label{thm:Karlovitz-suff}
Given
$n < \min\{m-1,r\}$,
if
 $\XX_n\perp \bfu_{n+1}$ and 
\begin{equation}\label{ineq:Karlovitz}
\sum_{i=1}^n\|\bfu_i-P_n\bfu_i\|^2\sigma^2_i\leq \sigma^2_{n+1}-\sigma^2_{n+2},
\end{equation}
where $P_n$ is the orthogonal projection onto $\XX_n$, then $\XX_n$ is an optimal subspace for $\calA$. 
\end{theorem}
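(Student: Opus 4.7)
The plan is to show $E(\calA, \XX_n) \le \sigma_{n+1}$; combined with the general bound $E(\calA, \XX_n) \ge d_n(\calA) = \sigma_{n+1}$ this forces equality, so $\XX_n$ is optimal. By \cref{eq:max} the required inequality reads $\bfz^T A A^T \bfz \le \sigma_{n+1}^2\, \bfz^T\bfz$ for every $\bfz \perp \XX_n$. Expanding $\bfz = \sum_{i=1}^m c_i \bfu_i$ with $c_i = \bfu_i^T \bfz$, this is a weighted sum of the $c_i^2$ with weights $\sigma_i^2 - \sigma_{n+1}^2$, positive for $i \le n$, zero at $i=n+1$, and non-positive for $i \ge n+2$. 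The first step is to exploit the hypothesis $\bfu_{n+1} \perp \XX_n$ by peeling off the $\bfu_{n+1}$ component: I would write $\bfz = c_{n+1}\bfu_{n+1} + \bfy$ with $\bfy \in \XX_n^\perp \cap \bfu_{n+1}^\perp$. Since $AA^T \bfu_{n+1} = \sigma_{n+1}^2 \bfu_{n+1}$ and $\bfy \perp \bfu_{n+1}$, the cross terms vanish and a direct computation yields
\[
\bfz^T A A^T \bfz - \sigma_{n+1}^2\, \bfz^T\bfz \;=\; \bfy^T A A^T \bfy - \sigma_{n+1}^2\, \bfy^T\bfy,
\]
so it suffices to prove the same inequality for $\bfy$ on the smaller subspace $\XX_n^\perp \cap \bfu_{n+1}^\perp$, on which $c_{n+1}=0$.

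On this reduced subspace I would split $\bfy^T A A^T \bfy = \sum_{i=1}^n \sigma_i^2 c_i^2 + \sum_{i=n+2}^m \sigma_i^2 c_i^2$. The tail sum is at most $\sigma_{n+2}^2 (\|\bfy\|^2 - \sum_{i=1}^n c_i^2)$ because $\sigma_i \le \sigma_{n+2}$ for $i \ge n+2$. For the head sum, $\bfy \perp \XX_n$ gives $c_i = \bfu_i^T\bfy = ((I-P_n)\bfu_i)^T\bfy$ for $i \le n$ (since $P_n \bfu_i \in \XX_n$), so Cauchy--Schwarz yields $c_i^2 \le \|(I-P_n)\bfu_i\|^2\,\|\bfy\|^2$; the hypothesis \cref{ineq:Karlovitz} then bounds $\sum_{i=1}^n \sigma_i^2 c_i^2 \le (\sigma_{n+1}^2 - \sigma_{n+2}^2)\,\|\bfy\|^2$. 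Adding the two estimates produces $\bfy^T A A^T \bfy \le \sigma_{n+1}^2 \|\bfy\|^2$, as required.

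The main obstacle is recognizing that the preliminary reduction to $\XX_n^\perp \cap \bfu_{n+1}^\perp$ is essential rather than cosmetic: a direct Cauchy--Schwarz on $\bfz$ leaves an irreducible residual of the form $(\sigma_{n+1}^2 - \sigma_{n+2}^2)c_{n+1}^2$ that the hypothesis is powerless to absorb. After stripping the $\bfu_{n+1}$-component the estimate becomes a clean balance: the head sum is controlled by the spectral gap $\sigma_{n+1}^2 - \sigma_{n+2}^2$ via the hypothesis, and the slack from replacing $\sigma_i^2$ by $\sigma_{n+2}^2$ on the tail is exactly what accommodates the remaining mass $\|\bfy\|^2$ to give the target bound $\sigma_{n+1}^2 \|\bfy\|^2$.
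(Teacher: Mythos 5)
Your proof is correct: the splitting of $\bfz\perp\XX_n$ into its $\bfu_{n+1}$-component plus $\bfy\in\XX_n^\perp\cap\bfu_{n+1}^\perp$, the Cauchy--Schwarz bound $c_i^2\le\|(I-P_n)\bfu_i\|^2\,\|\bfy\|^2$ for $i\le n$ combined with \cref{ineq:Karlovitz}, and the tail estimate by $\sigma_{n+2}^2$ add up exactly as you claim to give $E(\calA,\XX_n)\le\sigma_{n+1}=d_n(\calA)$. The paper does not prove this theorem but refers to Karlovitz, and your initial decomposition is precisely the device attributed to Karlovitz in the proof of \cref{lem:Eisaneigenvalue_alt_reduce}, so your argument is essentially a correct self-contained reconstruction of the cited proof.
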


\section{Optimality criteria}\label{sec:optimality-crit}

With the aim of deriving novel conditions for optimality of subspaces, we first provide a characterization of the distance $E(\calA,\XX_n)$.
\begin{lemma}\label{lem:Eisaneigenvalue}
Let $P_n$ be the orthogonal projection onto $\XX_n$. The distance $E(\calA,\XX_n)$ is equal to the square root of the largest eigenvalue
of
\begin{equation}\label{eq:B}
\Sigma^2 - \Sigma U^T P_n U\Sigma.
\end{equation}
\end{lemma}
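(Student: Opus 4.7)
The plan is to start from the identity $E(\calA,\XX_n) = \|(I-P_n)A\|$ already established in \cref{eq:max0}, and then convert the operator norm on the right into an eigenvalue problem via the SVD of $A$.

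First I would square both sides and use the fact that for any real matrix $M$ one has $\|M\|^2 = \lambda_{\max}(M^T M)$. Since $P_n$ is an orthogonal projection, $I-P_n$ is symmetric and idempotent, so
\begin{equation*}
\|(I-P_n)A\|^2 = \lambda_{\max}\bigl(A^T(I-P_n)^T(I-P_n)A\bigr) = \lambda_{\max}\bigl(A^T(I-P_n)A\bigr).
\end{equation*}

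Next I would plug in the SVD $A = U\Sigma V^T$ to get
\begin{equation*}
A^T(I-P_n)A = V\,\Sigma U^T(I-P_n)U\Sigma\, V^T = V\bigl(\Sigma^2 - \Sigma U^T P_n U \Sigma\bigr)V^T,
\end{equation*}
where I used $U^TU = I$. Because $V$ is orthogonal, this similarity transformation preserves the spectrum, so the eigenvalues of $A^T(I-P_n)A$ coincide with those of $\Sigma^2 - \Sigma U^T P_n U \Sigma$. Taking the largest of them and then the square root yields the claim.

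There is no real obstacle here; the argument is a direct chain of identities. The only minor point worth checking carefully is that $(I-P_n)^T(I-P_n) = I - P_n$, which follows from the symmetry and idempotence of the orthogonal projection $P_n$. Everything else is a routine rearrangement via the SVD together with the invariance of eigenvalues under orthogonal conjugation.
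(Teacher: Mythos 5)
Your argument is correct and follows essentially the same route as the paper: both reduce $E(\calA,\XX_n)^2$ to the largest eigenvalue of $A^T(I-P_n)A$ (using the symmetry and idempotence of $I-P_n$, which the paper phrases via a Rayleigh quotient) and then conjugate by $V$ through the SVD to obtain the matrix $\Sigma^2 - \Sigma U^T P_n U\Sigma$. No gaps; the only cosmetic difference is that you invoke $\|M\|^2=\lambda_{\max}(M^TM)$ directly rather than writing out the Rayleigh quotient.
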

\begin{proof}
First note that $P_n = P_n^2 = P_n^T P_n$. 
Similar to \cite[Theorem~2.3]{Melkman:78}, by using \cref{eq:E} and the definition of $\calA$ we deduce that
\begin{align*}
E(\calA,\XX_n)^2 &= \max_{\|\bfx\|\leq 1}\|A\bfx-P_nA\bfx\|^2 =  \max_{\bfx\neq 0}\frac{((I-P_n)A\bfx,(I-P_n)A\bfx)}{(\bfx,\bfx)}
\\
&=\max_{\bfx\neq 0}\frac{(A^T(I-P_n)A\bfx,\bfx)}{(\bfx,\bfx)},
\end{align*}
and so $E(\calA,\XX_n)$ is the square root of the largest eigenvalue
of $M:=A^T(I-P_n)A$. From the SVD of $A$ we see
that $M = VBV^T$, where $B:=\Sigma^2 - \Sigma U^T P_n U\Sigma$ is the matrix in \cref{eq:B}.
Since $B$ is a similarity transformation of $M$, they share the same
eigenvalues.
\end{proof}

The characterization of $E(\calA,\XX_n)$ in \cref{lem:Eisaneigenvalue} forms the basis for our optimality criteria.
Let
\begin{equation*}
C_{n+1} :=
\sigma_{n+1}^2 I - \Sigma^2 + \Sigma U^T P_n U \Sigma,
\end{equation*}
 and let
$C_{n+1}[i_1,\ldots,i_k]$ denote the $k \times k$ submatrix of $C_{n+1}$
consisting of the rows and columns with indices $i_1,\ldots,i_k$.

\begin{lemma}\label{lem:optimality_criterion}
The subspace
$\XX_n$ is optimal for $\calA$ if and only if
$\XX_n \perp \bfu_{n+1}$ and
$C_{n+1}[1,\ldots,n,n+2,\ldots,m]$ is positive semi-definite.
\end{lemma}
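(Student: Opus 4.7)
The plan is to combine Lemma \ref{lem:Eisaneigenvalue} with Theorem \ref{thm:Karlovitz-nec} and the identity $d_n(\calA)=\sigma_{n+1}$ established in \cref{eq:width=sv}.

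First I would observe that since $d_n(\calA)=\sigma_{n+1}$ is by definition the minimum of $E(\calA,\XX_n)$ over all $n$-dimensional $\XX_n$, optimality of $\XX_n$ is equivalent to $E(\calA,\XX_n)\le \sigma_{n+1}$. By Lemma \ref{lem:Eisaneigenvalue}, $E(\calA,\XX_n)^2$ is the largest eigenvalue of $B:=\Sigma^2-\Sigma U^T P_n U\Sigma$, so optimality is equivalent to $\lambda_{\max}(B)\le \sigma_{n+1}^2$, i.e., to the full matrix $C_{n+1}=\sigma_{n+1}^2 I - B$ being positive semi-definite.

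Next I would reduce the PSD condition on $C_{n+1}$ to the submatrix condition by showing that under $\XX_n\perp \bfu_{n+1}$ the $(n+1)$-st row and column of $C_{n+1}$ vanish. Indeed, $\XX_n\perp \bfu_{n+1}$ gives $P_n\bfu_{n+1}=0$, so the $(n+1)$-st row of $U^T P_n U$ is $\bfu_{n+1}^T P_n U = (P_n\bfu_{n+1})^T U = 0$ and likewise for the column; hence the $(n+1)$-st row and column of $\Sigma U^T P_n U\Sigma$ are zero. Since $\sigma_{n+1}^2 I - \Sigma^2$ is diagonal with $(n+1,n+1)$ entry equal to zero, the entire $(n+1)$-st row and column of $C_{n+1}$ vanish. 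Standard linear algebra then yields that $C_{n+1}$ is PSD if and only if the principal submatrix $C_{n+1}[1,\ldots,n,n+2,\ldots,m]$ is PSD.

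With these two reductions in hand the equivalence is immediate. For the forward direction, optimality of $\XX_n$ forces $\XX_n\perp\bfu_{n+1}$ by Theorem \ref{thm:Karlovitz-nec} and forces $C_{n+1}$ to be PSD by the first reduction, so the submatrix is PSD by the second. Conversely, assuming $\XX_n\perp\bfu_{n+1}$ together with the submatrix being PSD, the second reduction upgrades this to $C_{n+1}$ itself being PSD, and the first reduction yields optimality. The only step requiring a (minor) calculation is the vanishing of the $(n+1)$-st row and column of $C_{n+1}$; the rest is a direct assembly of previously established results, so I do not expect a real obstacle.
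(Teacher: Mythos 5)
Your proposal is correct and follows essentially the same route as the paper: optimality is translated via $d_n(\calA)=\sigma_{n+1}$ and \cref{lem:Eisaneigenvalue} into positive semi-definiteness of $C_{n+1}$, the necessity of $\XX_n\perp\bfu_{n+1}$ comes from \cref{thm:Karlovitz-nec}, and the orthogonality is used to show the $(n+1)$-st row and column of $C_{n+1}$ vanish so that the full matrix is PSD if and only if the stated principal submatrix is. Your explicit computation of the vanishing row and column merely spells out a step the paper asserts without detail.
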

\begin{proof}
Suppose $\XX_n$ is optimal for $\calA$. Then, from \cref{eq:width=sv} we deduce that $E(\calA,\XX_n)= \sigma_{n+1}$, and by \cref{lem:Eisaneigenvalue} we have that $C_{n+1}$ is positive semi-definite. Conversely, if $C_{n+1}$ is positive semi-definite, then using again the same lemma we can conclude that $\XX_n$ is optimal for $\calA$.
Moreover, by \cref{thm:Karlovitz-nec},
$\XX_n \perp \bfu_{n+1}$ 
and the $(n+1)$-st row and $(n+1)$-st column of
$C_{n+1}$ are zero, and so
$C_{n+1}$ is positive semi-definite if and only if
$C_{n+1}[1,\ldots,n,n+2,\ldots,m]$ is positive semi-definite.
\end{proof}

\begin{proposition}\label{prop:optimality_criterion2}
The subspace
$\XX_n$ is optimal for $\calA$ if and only if 
$\XX_n \perp \bfu_{n+1}$ and
\begin{equation}\label{eq:mycond}
  \det(C_{n+1}[\calJ]) \ge 0,
\end{equation}
for any set of indices $\calJ\subseteq \{1,\ldots,n,n+2,\ldots,m\}$ such that
$\{1,\ldots,n\} \cap \calJ \ne \emptyset$.
\end{proposition}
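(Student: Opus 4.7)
The plan is to deduce the proposition from \cref{lem:optimality_criterion} via the standard fact that a real symmetric matrix is positive semi-definite if and only if \emph{every} principal minor (not just the leading ones) is non-negative. Write $\calI := \{1,\ldots,n,n+2,\ldots,m\}$ and $M := C_{n+1}[\calI]$, so that by \cref{lem:optimality_criterion} the optimality of $\XX_n$ is equivalent to $\XX_n \perp \bfu_{n+1}$ together with positive semi-definiteness of $M$. The ``only if'' direction is then immediate: if $M$ is positive semi-definite, all of its principal minors are non-negative, so $\det(C_{n+1}[\calJ]) \ge 0$ holds for every $\calJ \subseteq \calI$, and in particular for those with $\calJ \cap \{1,\ldots,n\} \ne \emptyset$.

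For the ``if'' direction, the principal-minor criterion reduces the task to showing $\det(C_{n+1}[\calJ]) \ge 0$ for every $\calJ \subseteq \calI$. The hypothesis \cref{eq:mycond} supplies this whenever $\calJ \cap \{1,\ldots,n\} \ne \emptyset$, so it remains to handle $\calJ \subseteq \{n+2,\ldots,m\}$. For such $\calJ$ I would use the decomposition
\begin{equation*}
C_{n+1}[\calJ] = \mathrm{diag}\bigl(\sigma_{n+1}^2 - \sigma_j^2\bigr)_{j \in \calJ} + \Sigma_\calJ \, (U^T P_n U)[\calJ] \, \Sigma_\calJ,
\end{equation*}
where $\Sigma_\calJ := \mathrm{diag}(\sigma_j)_{j \in \calJ}$. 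The first summand is positive definite, since $\sigma_j \le \sigma_{n+2} < \sigma_{n+1}$ for $j \ge n+2$ by \cref{eq:sigmas-unique-weak}. The second is positive semi-definite: $P_n$ is an orthogonal projection, so $U^T P_n U$ is positive semi-definite, and this property is inherited by its principal submatrix $(U^T P_n U)[\calJ]$ and preserved under the symmetric multiplication by $\Sigma_\calJ$. Hence $C_{n+1}[\calJ]$ is itself positive semi-definite and $\det(C_{n+1}[\calJ]) \ge 0$ is automatic.

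I do not anticipate a serious obstacle in carrying this out; the only real point of vigilance is invoking the correct criterion, namely non-negativity of \emph{all} principal minors rather than only the leading ones (the latter would characterize positive definiteness, which in general fails here). Conceptually, the proposition records the observation that, under \cref{eq:sigmas-unique-weak}, the principal minors of $C_{n+1}$ indexed by subsets of $\{n+2,\ldots,m\}$ are non-negative for free, so they may be omitted from the optimality check.
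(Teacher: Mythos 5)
Your proposal is correct and follows essentially the same route as the paper: reduce to \cref{lem:optimality_criterion}, invoke the characterization of positive semi-definiteness by non-negativity of all principal minors, and then dispose of the minors with $\calJ\subseteq\{n+2,\ldots,m\}$ by showing they are automatically non-negative. Your matrix-level splitting of $C_{n+1}[\calJ]$ into a positive (semi-)definite diagonal part plus $\Sigma_\calJ (U^T P_n U)[\calJ]\Sigma_\calJ$ is just the matrix form of the paper's quadratic-form computation in \cref{eq:posdef}, so there is no substantive difference.
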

\begin{proof}
By the previous lemma,
$\XX_n$ is optimal for $\calA$ if and only if
$\XX_n \perp \bfu_{n+1}$ and the matrix
$C_{n+1}[1,\ldots,n,n+2,\ldots,m]$ is positive semi-definite.
The latter is equivalent to the two conditions
\begin{equation}\label{eq:condother}
  \det(C_{n+1}[\calJ]) \ge 0, \quad \calJ\subseteq \{n+2,\ldots,m\},
\end{equation}
and \cref{eq:mycond}.
Thus, to complete the proof it is sufficient to
show that \cref{eq:condother} holds for all $\XX_n$, i.e., that
$C_{n+1}[n+2,n+3,\ldots,m]$ is positive semi-definite for any $\XX_n$.
To see this, let $\bfx = [0,\ldots,0,x_{n+2},\ldots,x_m]^T \in \RR^m$.
Then, noting that $P_n = P_n^2 = P_n^T P_n$,
\begin{equation}\label{eq:posdef}
 \bfx^T C_{n+1} \bfx
  = \sum_{i=n+2}^m (\sigma_{n+1}^2 - \sigma_i^2) x_i^2
    + \|P_n U \Sigma \bfx \|^2 \ge 0,
\end{equation}
and thus $C_{n+1}[n+2,n+3,\ldots,m]$ is indeed positive semi-definite.
\end{proof}

Alternatively, we can consider a sufficient condition for
optimality that involves checking the sign of only $n$ determinants.

\begin{corollary}\label{thm:optimality_sufficiency}
The subspace
$\XX_n$ is optimal for $\calA$ if
$\XX_n \perp \bfu_{n+1}$ and
\begin{equation}\label{eq:mycondstrict}
  \det(C_{n+1}[k,k+1,\ldots,n,n+2,\ldots,m]) > 0,
  \quad k=1,2,\ldots,n.
\end{equation}
\end{corollary}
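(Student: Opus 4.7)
The plan is to reduce to \cref{prop:optimality_criterion2} and \cref{lem:optimality_criterion} and then verify positive semi-definiteness of a single $(m-1)\times(m-1)$ matrix by a Cauchy-interlacing induction. By \cref{lem:optimality_criterion}, given that $\XX_n \perp \bfu_{n+1}$ it suffices to show that the principal submatrix $M_1 := C_{n+1}[1,\ldots,n,n+2,\ldots,m]$ is positive semi-definite. A free input is already on the table from the proof of \cref{prop:optimality_criterion2}: the trailing block $C_{n+1}[n+2,\ldots,m]$ is positive semi-definite by the argument around \cref{eq:posdef}, and this will serve as the base case of my induction.

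I would introduce the nested chain $M_k := C_{n+1}[k,k+1,\ldots,n,n+2,\ldots,m]$ for $k = n+1, n, \ldots, 1$, so that $M_{n+1}$ is the trailing block just mentioned and each $M_k$ is obtained from $M_{k+1}$ by prepending a single row and column. In this way $M_{k+1}$ sits as a principal submatrix of $M_k$ of size one less, and Cauchy's interlacing theorem applies: if the eigenvalues of $M_{k+1}$ are $\lambda_1 \ge \cdots \ge \lambda_{t-1}$ and those of $M_k$ are $\mu_1 \ge \cdots \ge \mu_t$, then $\mu_i \ge \lambda_i$ for $i = 1, \ldots, t-1$. Assuming inductively that $M_{k+1}$ is positive semi-definite, all $\lambda_i \ge 0$, so at most one eigenvalue of $M_k$, namely $\mu_t$, can be negative.

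The hypothesis \cref{eq:mycondstrict} says $\det(M_k) = \mu_1 \mu_2 \cdots \mu_t > 0$, which rules out a vanishing $\mu_t$ and rules out a single strictly negative $\mu_t$ with the rest positive; hence $M_k$ must be positive definite. Walking this inductive step from $k = n+1$ down to $k = 1$, I conclude that $M_1$ is positive (semi-)definite, and then \cref{lem:optimality_criterion} certifies that $\XX_n$ is optimal for $\calA$.

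I expect no real obstacle beyond bookkeeping. The one point deserving care is that the eigenvalue-counting argument genuinely uses both the sign and the non-vanishing of $\det(M_k)$, so the strict inequality in \cref{eq:mycondstrict} is essential and cannot be relaxed to a non-strict one at the inductive step; once this is noted, the Cauchy interlacing closes the induction cleanly and the corollary follows.
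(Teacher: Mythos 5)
Your proof is correct, and its overall architecture matches the paper's: reduce via \cref{lem:optimality_criterion} to positive semi-definiteness of $C_{n+1}[1,\ldots,n,n+2,\ldots,m]$, dispose of the trailing block $C_{n+1}[n+2,\ldots,m]$ by the argument around \cref{eq:posdef}, and then feed in the $n$ determinant hypotheses \cref{eq:mycondstrict}. The one genuine difference is the mechanism in the last step: the paper invokes the Sylvester-type equivalence between positive definiteness and positivity of the full nested chain of (trailing) principal minors, which is why it also needs the minors in \cref{eq:condotherstrict} and verifies them by observing that \cref{eq:posdef} is strict unless $x_{n+2}=\cdots=x_m=0$; you instead run a Cauchy-interlacing induction on the chain $M_{n+1},M_n,\ldots,M_1$, which is self-contained and only requires positive \emph{semi}-definiteness of the trailing block, not strict positivity of its minors. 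Your route is slightly longer but more elementary and marginally weaker in its input on the trailing block; the paper's is shorter by citing the standard minor criterion. A cosmetic point: in the inductive step, the case $\mu_t<0$ with a zero among $\mu_1,\ldots,\mu_{t-1}$ should also be mentioned, but it too gives $\det(M_k)\le 0$ and is excluded by \cref{eq:mycondstrict}, so nothing is lost.
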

\begin{proof}
The subspace $\XX_n$ is optimal for $\calA$ if $\XX_n \perp \bfu_{n+1}$ and the matrix \linebreak
$C_{n+1}[1,\ldots,n,n+2,\ldots,m]$ is positive definite.
The latter is equivalent to the two conditions
\begin{equation}\label{eq:condotherstrict}
  \det(C_{n+1}[k,k+1,\ldots,m]) > 0, \quad k= n+2,\ldots,m,
\end{equation}
and \cref{eq:mycondstrict}.
But \cref{eq:condotherstrict} holds for any $\XX_n$
since inequality \cref{eq:posdef} is strict unless $x_{n+2} = x_{n+3} = \cdots = x_m = 0$.
\end{proof}

Let us now express the subspace $\XX_n$ in the form
\begin{equation}\label{eq:X_W}
\XX_n=\spn\{\bfx_1,\ldots,\bfx_n\},
\end{equation}
where $\bfx_1,\ldots,\bfx_n$ are orthonormal vectors in $\RR^m$.
Then, the projection $P_n$ equals $XX^T$ where $X\in\RR^{m,n}$ is the matrix
whose columns are $\bfx_1,\ldots,\bfx_n$.
We can express these vectors in the basis $\bfu_1,\ldots,\bfu_m$,
and write
\begin{equation*}
 \bfx_j = \sum_{i=1}^m w_{ij} \bfu_i, \quad j=1,\ldots,n,
\end{equation*}
for coefficients $w_{ij} \in \RR$.
Letting $W\in\RR^{m,n}$ be the matrix
$[w_{ij}]_{i=1,\ldots,m,j=1,\ldots,n}$, we find that
$$ X = U W, $$
and it follows that
$$ P_n = U W W^T U^T, $$
and therefore, that
\begin{equation}\label{eq:C_W}
C_{n+1} = \sigma_{n+1}^2 I - \Sigma^2 + \Sigma W W^T \Sigma.
\end{equation}
Note that $W = U^T X$, which implies
$$ W^T W = X^T X = I, $$
and so the columns $\bfw_1,\ldots,\bfw_n$ of $W$ are orthonormal.

We can then further sharpen the condition of
\cref{prop:optimality_criterion2}
by making use of a matrix determinant identity.

\begin{lemma}\label{lem:det-formula}
Suppose $\XX_n$ is as in \cref{eq:X_W} and $C_{n+1}$ as in \cref{eq:C_W}.
If $\calJ\subseteq \{1,\ldots,n,n+2,\ldots,m\}$ is any set of indices, then
\begin{align*}
\det(C_{n+1}[\calJ]) =  \det(M_\calJ)\prod_{k\in \calJ}(\sigma_{n+1}^2-\sigma_k^2),
\end{align*}
where $M_\calJ = [m_{ij}]_{i,j=1,\ldots,n}$ has the elements
\begin{align}\label{eq:M-mat}
m_{ij} = 
 \sigma_{n+1}^2\sum_{k\in \calJ}\frac{w_{ki} w_{kj}}
      {\sigma_{n+1}^2-\sigma_k^2}+
   \sum_{k\not\in \calJ}w_{ki} w_{kj}.
\end{align}
\end{lemma}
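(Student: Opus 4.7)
The plan is to recognize $C_{n+1}[\calJ]$ as a rank-$n$ update to a diagonal matrix and then apply the matrix determinant lemma (Sylvester's identity), which converts the $|\calJ| \times |\calJ|$ determinant into an $n \times n$ one of exactly the right size to match $M_\calJ$. The final identification of the reduced determinant with $\det(M_\calJ)$ then uses the orthonormality relation $W^T W = I$.

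More concretely, I would first unpack \cref{eq:C_W} entry-by-entry to write, for $k,l \in \calJ$,
\begin{equation*}
 (C_{n+1}[\calJ])_{kl} = (\sigma_{n+1}^2-\sigma_k^2)\delta_{kl} + \sigma_k\sigma_l (WW^T)_{kl}.
\end{equation*}
Thus $C_{n+1}[\calJ] = D_\calJ + \widetilde W \widetilde W^T$, where $D_\calJ$ is the diagonal matrix with entries $\sigma_{n+1}^2-\sigma_k^2$, $k\in\calJ$, and $\widetilde W$ is the $|\calJ|\times n$ matrix with rows $\sigma_k[w_{k1},\ldots,w_{kn}]$, $k\in\calJ$. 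Note that $D_\calJ$ is invertible: under assumption \cref{eq:sigmas-unique-weak} we have $\sigma_k^2 > \sigma_{n+1}^2$ for $k \le n$ and $\sigma_k^2 < \sigma_{n+1}^2$ for $k \ge n+2$, so no diagonal entry vanishes.

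Next I would apply the matrix determinant lemma:
\begin{equation*}
 \det(C_{n+1}[\calJ]) = \det(D_\calJ)\,\det\bigl(I_n + \widetilde W^T D_\calJ^{-1}\widetilde W\bigr).
\end{equation*}
The first factor is exactly $\prod_{k\in\calJ}(\sigma_{n+1}^2-\sigma_k^2)$. For the second factor, a direct computation gives
\begin{equation*}
 \bigl(I_n + \widetilde W^T D_\calJ^{-1}\widetilde W\bigr)_{ij}
   = \delta_{ij} + \sum_{k\in\calJ}\frac{\sigma_k^2\, w_{ki}w_{kj}}{\sigma_{n+1}^2-\sigma_k^2}.
\end{equation*}
To rewrite this as $m_{ij}$ in \cref{eq:M-mat}, I would split $\delta_{ij} = (W^TW)_{ij} = \sum_{k\in\calJ}w_{ki}w_{kj} + \sum_{k\notin\calJ}w_{ki}w_{kj}$ and combine the first sum with the $\calJ$-sum, using $1 + \sigma_k^2/(\sigma_{n+1}^2-\sigma_k^2) = \sigma_{n+1}^2/(\sigma_{n+1}^2-\sigma_k^2)$. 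This produces precisely $M_\calJ$, completing the proof.

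The only non-routine step is noticing that $C_{n+1}[\calJ]$ has the structure ``diagonal plus low rank,'' so that the matrix determinant lemma is applicable and yields an $n\times n$ determinant rather than a $|\calJ|\times |\calJ|$ one; everything else is algebraic rewriting using $W^TW=I$.
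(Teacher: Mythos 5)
Your proposal is correct and follows essentially the same route as the paper: both recognize $C_{n+1}[\calJ]$ as a diagonal matrix plus the rank-$n$ term $\Sigma[\calJ]W[\calJ,\calI_n]\,(\Sigma[\calJ]W[\calJ,\calI_n])^T$, apply the matrix determinant lemma $\det(F+GG^T)=\det(F)\det(I+G^TF^{-1}G)$, and then use $W^TW=I$ to split $\delta_{ij}$ over $\calJ$ and its complement to recover $m_{ij}$. Your explicit remark that the diagonal factor is invertible under assumption \cref{eq:sigmas-unique-weak} is a small, welcome addition that the paper leaves implicit.
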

\begin{proof}
Let $\calI_k:=\{1,\ldots,k\}$, and let $W[\calJ,\calI_n]$ be the submatrix of $W$ consisting of the rows with indices in $\calJ$ and columns with indices in $\calI_n$.
We use the fact that for any non-singular matrix
$F \in \RR^{m,m}$ and any matrix $G \in \RR^{m,n}$, it holds that
\begin{equation*}
\det(F+GG^T)=\det(I+G^T F^{-1} G)\det(F);
\end{equation*}
see~\cite[Theorem~18.1.1]{Harville:1997}.
Applying this identity with
\begin{equation*}
F:= (\sigma_{n+1}^2 I - \Sigma^2)[\calJ],\quad
G:= \Sigma[\calJ]W[\calJ,\calI_n],
\end{equation*}
we find that
$$
\det(C_{n+1}[\calJ]) =\det(I+ W[\calJ,\calI_{n}]^TD W[\calJ,\calI_{n}])\det(F),
$$
where $D:=\Sigma[\calJ] F^{-1}\Sigma[\calJ]$
is the diagonal matrix given by
\begin{align*}
D_{kk} = \frac{\sigma_k^2}{\sigma_{n+1}^2-\sigma_k^2}, \quad k\in \calJ.
\end{align*}
Moreover, we find that 
\begin{align*}
I &= \left[\sum_{k\in \calI_m}w_{ki} w_{kj}\right]_{i,j=1,\ldots,n},
\\
W[\calJ,\calI_{n}]^TD W[\calJ,\calI_{n}] &= \left[ 
    \sum_{k\in \calJ}\frac{\sigma_k^2 w_{ki} w_{kj}}
      {\sigma_{n+1}^2-\sigma_k^2}\right]_{i,j=1,\ldots,n},
\end{align*}
and therefore $M_\calJ=I+ W[\calJ,\calI_{n}]^TD W[\calJ,\calI_{n}]$ since
$$ m_{ij} = 
     \sum_{k\in \calJ} w_{ki} w_{kj}
    + 
     \sum_{k\not\in \calJ} w_{ki} w_{kj}
    + \sum_{k\in \calJ}\frac{\sigma_k^2 w_{ki} w_{kj}}
      {\sigma_{n+1}^2-\sigma_k^2}.
$$
Finally, since $F$ is diagonal, we have
$$\det(F)= \prod_{k\in \calJ}(\sigma_{n+1}^2-\sigma_k^2),$$
and the result follows.
\end{proof}

\begin{theorem}\label{thm:optimality_criterion3}
The subspace $\XX_n$ is optimal for ${\calA}$ if and only if
$\XX_n \perp \bfu_{n+1}$ and for all sets of indices $\calJ\subseteq \{1,\ldots,n,n+2,\ldots,m\}$ such that $\{1,\ldots,n\} \cap \calJ \ne \emptyset$ we have
\begin{equation*}
  (-1)^s \det(M_\calJ) \geq 0,
\end{equation*}
 where $s$ is the cardinality of $\{1,\ldots,n\} \cap \calJ$ and $M_\calJ$ is the matrix given in \cref{eq:M-mat}.
\end{theorem}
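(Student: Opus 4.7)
The plan is to combine \cref{prop:optimality_criterion2} with the determinant identity in \cref{lem:det-formula}, and simply track signs carefully. By \cref{prop:optimality_criterion2}, optimality of $\XX_n$ is equivalent to $\XX_n \perp \bfu_{n+1}$ together with $\det(C_{n+1}[\calJ]) \ge 0$ for every $\calJ \subseteq \{1,\ldots,n,n+2,\ldots,m\}$ satisfying $\{1,\ldots,n\} \cap \calJ \ne \emptyset$. So it suffices to show that, under the running assumption \cref{eq:sigmas-unique-weak}, this determinantal condition matches the inequality $(-1)^s \det(M_\calJ) \geq 0$.

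First I would apply \cref{lem:det-formula} to rewrite
\begin{equation*}
\det(C_{n+1}[\calJ]) = \det(M_\calJ)\prod_{k\in \calJ}(\sigma_{n+1}^2-\sigma_k^2).
\end{equation*}
The crucial step is then determining the sign of the product on the right. I would split $\calJ$ into its intersection with $\{1,\ldots,n\}$ and its intersection with $\{n+2,\ldots,m\}$. For indices $k \in \{1,\ldots,n\} \cap \calJ$, the monotonicity $\sigma_1 \geq \cdots \geq \sigma_n > \sigma_{n+1}$ from \cref{eq:sigmas-unique-weak} yields $\sigma_{n+1}^2 - \sigma_k^2 < 0$, and there are exactly $s$ such factors. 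For indices $k \in \{n+2,\ldots,m\} \cap \calJ$, the inequality $\sigma_{n+1} > \sigma_{n+2} \geq \sigma_k \geq 0$ yields $\sigma_{n+1}^2 - \sigma_k^2 > 0$. Consequently,
\begin{equation*}
\prod_{k\in \calJ}(\sigma_{n+1}^2-\sigma_k^2) = (-1)^s \, \gamma_{\calJ},
\qquad \gamma_{\calJ} := \prod_{k \in \calJ}\bigl|\sigma_{n+1}^2-\sigma_k^2\bigr| > 0,
\end{equation*}
where $\gamma_{\calJ} > 0$ follows since each factor in the product is strictly non-zero (the case $\sigma_k = \sigma_{n+1}$ is ruled out by our assumption on $\calJ$ and by \cref{eq:sigmas-unique-weak}).

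Substituting back gives $\det(C_{n+1}[\calJ]) = (-1)^s \gamma_{\calJ} \det(M_\calJ)$, and since $\gamma_{\calJ} > 0$, the inequality $\det(C_{n+1}[\calJ]) \geq 0$ is equivalent to $(-1)^s \det(M_\calJ) \geq 0$. Combining this equivalence, which holds for every admissible $\calJ$, with the criterion of \cref{prop:optimality_criterion2} completes the proof. No real obstacle is anticipated: the only delicate point is verifying that every factor $\sigma_{n+1}^2 - \sigma_k^2$ with $k \in \calJ$ is strictly non-zero so that the sign of the product is unambiguous, which is ensured precisely by the standing assumption \cref{eq:sigmas-unique-weak}.
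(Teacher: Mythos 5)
Your proposal is correct and follows essentially the same route as the paper: combine \cref{prop:optimality_criterion2} with \cref{lem:det-formula} and observe that under \cref{eq:sigmas-unique-weak} the product $\prod_{k\in\calJ}(\sigma_{n+1}^2-\sigma_k^2)$ has sign exactly $(-1)^s$, which makes $\det(C_{n+1}[\calJ])\geq 0$ equivalent to $(-1)^s\det(M_\calJ)\geq 0$. Your sign-tracking is just a more explicit spelling-out of the paper's one-line observation, so there is nothing to add.
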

\begin{proof}
From \cref{prop:optimality_criterion2} we know that $\XX_n$ is optimal for ${\calA}$ if and only if
$\XX_n \perp \bfu_{n+1}$ and for all sets of indices $\calJ\subseteq \{1,\ldots,n,n+2,\ldots,m\}$ such that $\{1,\ldots,n\} \cap \calJ \ne \emptyset$, we have
\begin{equation*}
  \det(C_{n+1}[J]) = \det(M_\calJ)\prod_{k\in \calJ}(\sigma_{n+1}^2-\sigma_k^2) \ge 0.
\end{equation*}
Now, since the singular values satisfy \cref{eq:sigmas-unique-weak}
we find that 
\begin{align*}
(-1)^s \prod_{k\in \calJ}(\sigma_{n+1}^2-\sigma_k^2) > 0,
\end{align*}
which gives the result.
\end{proof}

There is a freedom in the choice of the basis $\bfx_1,\ldots,\bfx_n$ for the space $\XX_n$ in \cref{eq:X_W}, and this freedom will affect the matrices in the above optimality criterion. Looking at the sufficient condition in \cref{thm:Karlovitz-suff}, a natural candidate for a basis of $\XX_n$ seems to be $P_n\bfu_1,\ldots,P_n\bfu_n$, as long as they are linearly independent. If they are, then they can be orthonormalized by a Gram--Schmidt process before being used in \cref{thm:optimality_criterion3}. Let us now prove that $P_n\bfu_1,\ldots,P_n\bfu_n$ are in fact linearly independent whenever $\XX_n$ is optimal.

\begin{proposition}
Let $P_n$ be the orthogonal projection onto $\XX_n$.
If $\XX_n$ is optimal for $\calA$, then
$P_n \bfu_1,\ldots,P_n \bfu_n$ are linearly independent.
\end{proposition}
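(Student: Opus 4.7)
My plan is to argue by contradiction, using the variational characterization of $E(\calA,\XX_n)$ in \cref{eq:max} as the main tool. Suppose, toward a contradiction, that $P_n\bfu_1,\ldots,P_n\bfu_n$ are linearly dependent, and choose scalars $c_1,\ldots,c_n$, not all zero, with $\sum_{i=1}^n c_i P_n\bfu_i=0$. Setting $\bfz:=\sum_{i=1}^n c_i\bfu_i$, orthonormality of the left singular vectors forces $\bfz\neq 0$, while the dependence relation says exactly $P_n\bfz=0$, i.e.\ $\bfz\perp\XX_n$. Thus $\bfz$ is an admissible competitor in \cref{eq:max}.

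Next I would evaluate the Rayleigh-type quotient $\|A^T\bfz\|^2/\|\bfz\|^2$ at this $\bfz$. Since $A^T\bfu_i=\sigma_i\bfv_i$ and the $\bfv_i$ are orthonormal, the quotient reduces to
\begin{equation*}
\frac{\|A^T\bfz\|^2}{\|\bfz\|^2}=\frac{\sum_{i=1}^n c_i^2\sigma_i^2}{\sum_{i=1}^n c_i^2}.
\end{equation*}
The standing hypothesis \cref{eq:sigmas-unique-weak} provides the strict gap $\sigma_n>\sigma_{n+1}$, hence $\sigma_i>\sigma_{n+1}$ for every index $i\leq n$ that actually appears in the sum, and the ratio is therefore strictly greater than $\sigma_{n+1}^2$.

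Plugging $\bfz$ into \cref{eq:max} then yields $E(\calA,\XX_n)>\sigma_{n+1}$, which contradicts $E(\calA,\XX_n)=d_n(\calA)=\sigma_{n+1}$ coming from the optimality of $\XX_n$ together with \cref{eq:width=sv}. I do not foresee a real obstacle here; the only subtlety worth flagging is the recognition that the natural test vector is built directly from the hypothesised linear relation (so that it automatically lies in $\XX_n^\perp$ and in $\spn\{\bfu_1,\ldots,\bfu_n\}$ simultaneously), and that the strict separation $\sigma_n>\sigma_{n+1}$ in \cref{eq:sigmas-unique-weak} is precisely what upgrades the inequality to a strict one, which is what delivers the contradiction.
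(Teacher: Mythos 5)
Your proof is correct and is essentially the paper's argument: you build a nonzero test vector from the hypothesized dependence relation, note that it is orthogonal to $\XX_n$ yet lies in $\spn\{\bfu_1,\ldots,\bfu_n\}$, and show the resulting Rayleigh quotient exceeds $\sigma_{n+1}^2$, contradicting optimality via \cref{eq:width=sv}. The only cosmetic difference is that you plug $\bfz=\sum_i c_i\bfu_i$ directly into \cref{eq:max}, whereas the paper runs the same computation through \cref{lem:Eisaneigenvalue} with the similarity-transformed matrix $\Sigma^2-\Sigma U^TP_nU\Sigma$ and a rescaled test vector $\bfy$ with $\Sigma\bfy=\bfc$.
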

\begin{proof}
Suppose, on the contrary, that there are coefficients
$c_1,\ldots,c_n \in \RR$, not all zero, such that
$$ \sum_{i=1}^n c_i P_n \bfu_i = 0. $$
Then,
$$ P_n \left(\sum_{i=1}^n c_i \bfu_i\right) = 0, $$
which we can write as
$$ P_n U \bfc = 0, $$
where $\bfc = [c_1,\ldots,c_n,0,\ldots,0]^T \in \RR^m$.
Let $\bfy\in \mathbb{R}^m$ be such that $ \Sigma\bfy= \bfc$.
Then,
$$ P_n U \Sigma \bfy = 0, $$
and therefore,
$$ \bfy^T (\Sigma^2 - \Sigma U^T P_n U \Sigma) \bfy = \bfy^T \Sigma^2 \bfy. $$
Since not all the coefficients $c_1,\ldots,c_n$ are zero,
not all the coefficients $y_1,\ldots,y_n$ are zero.
Therefore, we can form the Rayleigh quotient of $B:=\Sigma^2 - \Sigma U^T P_n U \Sigma$ and $\bfy$,
and we find
$$ \frac{\bfy^T (\Sigma^2 - \Sigma U^T P_n U \Sigma) \bfy}{\bfy^T \bfy}
   = \frac{\bfy^T \Sigma^2 \bfy}{\bfy^T \bfy}
   = \frac{\sum_{i=1}^n y_i^2 \sigma_i^2}
          {\sum_{i=1}^n y_i^2}
     \ge \sigma_n^2, $$
and so $E(\calA,\XX_n) \ge \sigma_n$ and $\XX_n$ is not optimal for ${\calA}$
(see \cref{lem:Eisaneigenvalue}).
\end{proof}

\section{Optimality for the $1$-width and best rank-$1$ approximation} \label{sec:1-width}
For the $1$-width we can derive an explicit form of the
optimality criterion in \cref{thm:optimality_criterion3}.
Suppose $\XX_1 = \spn\{\bfx_1\}$
for some $\bfx_1 = \sum_{i=1}^mw_i\bfu_i \in \RR^m$ with $\|\bfx_1\| = 1$.

\begin{theorem}\label{thm:optimality_criterion_n=1}
The subspace $\XX_1$ is optimal for ${\calA}$ if and only if
$w_2 = 0$ and
\begin{equation}\label{ineq:optimality_criterion_n=1}
\sum_{i=3}^m \frac{w_i^2}{\sigma_2^2-\sigma_i^2}\leq \frac{w_1^2}{\sigma_1^2-\sigma_2^2}.
\end{equation}
\end{theorem}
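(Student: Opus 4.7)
The plan is to specialize \cref{thm:optimality_criterion3} to $n=1$. Writing $\bfx_1 = \sum_{i=1}^m w_i \bfu_i$ with $\|\bfx_1\|=1$, the orthogonality condition $\XX_1 \perp \bfu_{n+1}=\bfu_2$ is just $w_2=0$. For the second half of \cref{thm:optimality_criterion3}, the matrix $M_\calJ$ is $1\times 1$ and reduces to the scalar
\begin{equation*}
m_{11}(\calJ) = \sigma_2^2 \sum_{k \in \calJ} \frac{w_k^2}{\sigma_2^2 - \sigma_k^2} + \sum_{k \in \{1,\ldots,m\} \setminus \calJ} w_k^2.
\end{equation*}
Every admissible $\calJ \subseteq \{1,3,\ldots,m\}$ must contain the index $1$, so the cardinality $s$ appearing in \cref{thm:optimality_criterion3} is always $1$, and the condition becomes $m_{11}(\calJ) \leq 0$ for every such $\calJ$.

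The decisive step is to show that this family of conditions collapses to a single one corresponding to $\calJ_{\max}:=\{1,3,4,\ldots,m\}$. Moving an index $k\geq 3$ from the complement into $\calJ$ changes $m_{11}$ by
\begin{equation*}
w_k^2 \left( \frac{\sigma_2^2}{\sigma_2^2 - \sigma_k^2} - 1 \right) = \frac{\sigma_k^2 w_k^2}{\sigma_2^2 - \sigma_k^2} \geq 0,
\end{equation*}
because $\sigma_k \leq \sigma_3 < \sigma_2$ for $k \geq 3$ by \cref{eq:sigmas-unique-weak}. Hence $m_{11}$ is maximized at $\calJ_{\max}$, and it suffices to require $m_{11}(\calJ_{\max}) \leq 0$. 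For that choice the complement in $\{1,\ldots,m\}$ is $\{2\}$, whose contribution vanishes thanks to $w_2=0$, so the condition reduces to
\begin{equation*}
\sigma_2^2 \left( \frac{w_1^2}{\sigma_2^2 - \sigma_1^2} + \sum_{k=3}^m \frac{w_k^2}{\sigma_2^2 - \sigma_k^2} \right) \leq 0.
\end{equation*}
Dividing by $\sigma_2^2>0$ and moving the negative $k=1$ term to the right-hand side produces exactly \cref{ineq:optimality_criterion_n=1}.

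The main (modest) obstacle is the monotonicity step that identifies $\calJ_{\max}$ as the worst case; once that is in hand, a seemingly exponential family of determinantal inequalities collapses to a single scalar one, and everything else is direct substitution into the formulas already set up in \cref{lem:det-formula} and \cref{thm:optimality_criterion3}.
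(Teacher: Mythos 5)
Your proposal is correct and follows essentially the same route as the paper: specialize \cref{thm:optimality_criterion3} to $n=1$ so that $M_\calJ$ is a scalar, observe that (given $w_2=0$ and $1\in\calJ$) the constraint is tightest for $\calJ=\{1,3,\ldots,m\}$ because $\frac{\sigma_2^2}{\sigma_2^2-\sigma_k^2}\ge 1$ for $k\ge 3$, and then evaluate that single constraint to get \cref{ineq:optimality_criterion_n=1}. Your monotonicity phrasing (moving an index into $\calJ$ increases $m_{11}$ by $\sigma_k^2 w_k^2/(\sigma_2^2-\sigma_k^2)\ge 0$) is just a reorganization of the comparison the paper makes between a strict subset $\calJ$ and the full index set, so the two arguments coincide.
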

\begin{proof}
Note that for $n=1$ the matrix $M_\calJ$ in \cref{eq:M-mat} is a scalar. Using \cref{thm:optimality_criterion3}, 
the subspace $\XX_1$ is optimal if and only if $w_2 = 0$ and
$$M_\calJ =
  \sigma_2^2\sum_{i\in \calJ}\frac{w_i^2}{\sigma_2^2-\sigma_i^2}
  +  \sum_{i\not\in \calJ}w_i^2\leq 0, $$
for any subset $\calJ$ of $\{1,3,\ldots,m\}$ that contains $1$.
Since $w_2 = 0$, this is equivalent to
\begin{align}\label{ineq:optimality_criterion_J}
\sigma_2^2\sum_{i\in \calJ}\frac{w_i^2}{\sigma_2^2-\sigma_i^2}+
   \sum_{i\in \calK}w_i^2\leq 0,
\end{align}
where $\calK = \{1,3,\ldots,m\} \setminus J$.
Now, if $\calJ=\{1,3,\ldots,m\}$, then $\calK = \emptyset$
and \cref{ineq:optimality_criterion_J} is equivalent to
\cref{ineq:optimality_criterion_n=1}.
If, on the other hand, $\calJ$ is a strict subset of $\{1,3,\ldots,m\}$, then
\begin{align*}
\sigma_2^2\sum_{i\in \calJ}\frac{w_i^2}{\sigma_2^2-\sigma_i^2}+
 \sum_{i\in \calK}w_i^2\leq 
  \sigma_2^2\sum_{i\in \calJ}\frac{w_i^2}{\sigma_2^2-\sigma_i^2}+
  \sum_{i\in \calK}\frac{\sigma_2^2}{\sigma_2^2-\sigma_i^2} w_i^2 =
 \sigma_2^2\sum_{\substack{i=1\\ i\neq 2}}^m\frac{w_i^2}{\sigma_2^2-\sigma_i^2}
  \leq 0,
\end{align*}
since $\sigma_2\geq \sigma_2-\sigma_j$ for any $j \in \{3,\ldots,m\}$.
This concludes the proof.
\end{proof}

Observe that by combining the above result with either \cref{thm:low-rank-1} or \cref{thm:low-rank-2} we obtain a characterization of several best rank-$1$ approximations to $A$.
We remark that a condition similar to \cref{ineq:optimality_criterion_n=1} was found by Antoulas \cite{Antoulas:1997} in the special case of rank-1 approximation to Hankel matrices.

The optimality criterion in \cref{ineq:optimality_criterion_n=1} is trivially satisfied by the classical optimal space $\spn{\{\bfu_1\}}$ and it provides a characterization of ``how far'' a one-dimensional space can deviate from $\spn{\{\bfu_1\}}$  and still remain optimal.
Specifically, let $\bfx_1 = \sum_{i=1}^mw_i\bfu_i \in \RR^m$ with $\|\bfx_1\| = 1$, then \cref{thm:optimality_criterion_n=1} shows that if $\XX_1= \spn\{\bfx_1\}$ is optimal for $\calA$ and $A\neq 0$, then $w_1\neq 0$. Indeed, if $w_1=0$, then from \cref{ineq:optimality_criterion_n=1} we have that $w_i=0$, $i=2,\ldots,m$ and so $\bfx_1=0$.
The space $\XX_1=\{0\}$ can only be optimal for the $1$-width of $\calA$ if $\sigma_1=\sigma_2$, which contradicts assumption \cref{eq:sigmas-unique-weak}.

Let us now compare the result in \cref{thm:optimality_criterion_n=1} with the sufficient condition of Karlovitz (\cref{thm:Karlovitz-suff}).
Note that \cref{ineq:optimality_criterion_n=1} is equivalent to 
\begin{equation}\label{ineq:optimality_criterion2}
\sum_{i=3}^m \frac{\sigma_1^2-\sigma_i^2}{\sigma_2^2-\sigma_i^2}w_i^2\leq 1,
\end{equation}
by using $w_1^2=1-\sum_{i=3}^mw_i^2$ and $w_2=0$. On the other hand, for $n=1$, the left-hand side of \cref{ineq:Karlovitz} equals
\begin{equation*}
\|\bfu_1-(\bfu_1,\bfx_1)\bfx_1\|^2\sigma_1^2 = (\|\bfu_1\|^2-(\bfu_1,\bfx_1)^2)\sigma_1^2 = (1-w_1^2)\sigma_1^2= \sum_{i=3}^mw_i^2\sigma_1^2,
\end{equation*}
and so, condition \cref{ineq:Karlovitz} is equivalent to
\begin{equation}\label{ineq:Karlovitz_n=1}
\sum_{i=3}^m\frac{\sigma_1^2}{\sigma_2^2-\sigma_3^2}w_i^2\leq 1.
\end{equation}
Since the singular values are decreasing, we have
\begin{equation}\label{ineq:comparison_Karlovitz}
\frac{\sigma_1^2-\sigma_i^2}{\sigma_2^2-\sigma_i^2}\leq \frac{\sigma_1^2}{\sigma_2^2-\sigma_3^2}, \quad i=3,\ldots,m,
\end{equation}
and condition \cref{ineq:Karlovitz_n=1} implies \cref{ineq:optimality_criterion2}, as expected. However, we note that the case $i=3$ in \cref{ineq:comparison_Karlovitz} is a strict inequality if $\sigma_3> 0$. Thus, for $n=1$, the sufficient condition in \cref{thm:Karlovitz-suff} is stronger than necessary whenever $\sigma_3>0$.

\begin{example}\label{ex:n=1_3D}
Let $m=3$ and consider the space $\XX_1 = \spn\{\bfx_1\}$ for some $\bfx_1 = w_1\bfu_1+w_2\bfu_2+w_3\bfu_3$, with $\|\bfx_1\| = 1$. From \cref{thm:optimality_criterion_n=1} it follows that $\XX_1$ is optimal for $\calA$ if and only if $w_2=0$ and
\begin{align}\label{ineq:optimality-3D}
w_3^2 \le \frac{\sigma_2^2 - \sigma_3^2}{\sigma_1^2 - \sigma_3^2}.
\end{align}
Now, let $w_1=\cos(\alpha)$, $w_2=0$ and $w_3=\sin(\alpha)$, where $\alpha$ is the angle between $\XX_1$ and the classical optimal space $\spn\{\bfu_1\}$. Condition \cref{ineq:optimality-3D} is then equivalent to 
\begin{align}\label{ineq:optimality-3D-angle}
|\alpha|\leq \extreme{\alpha}:=\arcsin\left(\sqrt{\frac{\sigma_2^2 - \sigma_3^2}{\sigma_1^2 - \sigma_3^2}}\right).
\end{align}
Thus, $\XX_1$ is optimal for $\calA$ if and only if it is rotated in the $(\bfu_1,\bfu_3)$-plane with an angle less than or equal to $\extreme{\alpha}$ from the $\bfu_1$-axis.
An illustration of this is given in \cref{fig:optimality-3D} for $\bfu_i=\bfe_i$, $i=1,2,3$.
\end{example}

\begin{figure}
\center
\begin{tikzpicture}[scale=2]
	\fill[black!10] (0,0) ellipse (2cm and 1cm);
    \draw[thick] (0,0) ellipse (2cm and 1cm);
    \draw[dashed,thick] (0,0)--(2,0) node[midway,above]{$\sigma_1$};
    \draw[dashed,thick] (0,0)--(0,1) node[midway,left]{$\sigma_3$};
    \draw[thick] (-2,-1)--(2,1) node[right,below]{$\XX_1$};
    \begin{scope}
    \path[clip] (0,0)--(2,1)--(2,0);
    \draw (0,0) circle (0.5cm);
    \node[above right] at (0.24,-0.02) {$\alpha$}; 
    \end{scope}
\end{tikzpicture}
\caption{The $(\bfe_1,\bfe_3)$ cross-section of $\calA$ in the case $m=3$. The space $\XX_1$ is optimal for $\calA$ if and only if $|\alpha|\leq\extreme{\alpha}$ in \cref{ineq:optimality-3D-angle}.}\label{fig:optimality-3D}
\end{figure}

\begin{example} \label{ex:n=1_Hankel_angle}
Similar to an example in \cite{Antoulas:1997} we consider the $3\times3$ matrix 
\begin{equation*} 
A = \begin{bmatrix}
      1 & 0 & 1/4 \\
      0 & 1/4 & 0 \\
      1/4 & 0 & 1
    \end{bmatrix}.
\end{equation*}
Note that this is a symmetric matrix with Hankel structure.
It is easy to verify that $A = U \Sigma U^T$, with
\begin{equation*}
  \sigma_1 = \frac{5}{4},\ 
  \bfu_1 = \frac{1}{\sqrt{2}}\begin{bmatrix} 1 \\ 0 \\ 1 \end{bmatrix}; \quad 
  \sigma_2 = \frac{3}{4},\ 
  \bfu_2 = \frac{1}{\sqrt{2}}\begin{bmatrix} 1 \\ 0 \\ -1 \end{bmatrix}; \quad 
  \sigma_3 = \frac{1}{4},\ 
  \bfu_3 = \begin{bmatrix} 0 \\ 1 \\ 0 \end{bmatrix}.
\end{equation*}
From \cref{ex:n=1_3D} we deduce that any space $\XX_1= \spn\{\bfx_1\}$ is optimal for $\calA$ if and only if it is rotated in the $(\bfu_1,\bfu_3)$-plane with an angle less than or equal to
\begin{equation*}
\extreme{\alpha} = \arcsin\bigl(1/\sqrt{3}\bigr)\approx 35.26^\circ
\end{equation*}
from the $\bfu_1$-axis. The maximum angle $\extreme{\alpha}$ corresponds to the unit vector
\begin{equation*}
 \extreme{\bfx}_1=\frac{\sqrt{2}\bfu_1+ \bfu_3}{\sqrt{3}}
 =\frac{1}{\sqrt{3}}\begin{bmatrix} 1 \\ 1 \\ 1 \end{bmatrix},
\end{equation*}
which will be an interesting choice for structure-preserving approximation (see \cref{ex:n=1_Hankel_range}).
\end{example}

If $A$ is a symmetric matrix, then the low-rank approximations in \cref{thm:low-rank-1,thm:low-rank-2} do not, in general, result in a symmetric approximation to $A$. As we shall see in the next proposition, if given a proper choice of the scaling factor, then each unit vector satisfying the optimality criterion in \cref{thm:optimality_criterion_n=1} provides a symmetric best rank-$1$ approximation to a symmetric matrix $A$ (at least in the case $m=3$). We remark that the next result is very similar to \cite[Theorem 3.1]{Antoulas:1997}. Specifically, if $A$ is a Hankel matrix, then \cite[Theorem 3.1]{Antoulas:1997} provides a characterization of best rank-$1$ approximations to $A$ that preserve the Hankel structure. This characterization was later generalized to best rank-$1$ Hankel approximations to a symmetric matrix $A$ in \cite[Theorem 4.1]{Knirsch:preprint}. 

\begin{proposition} \label{pro:n=1_symmetry}
Let $n=1$ and $m=3$. Let $A$ be a symmetric matrix and let $\bfx_1=\sum_{i=1}^3w_i\bfu_i$ be a unit vector such that $\XX_1:=\spn\{\bfx_1\}$ is optimal for $\calA$. Then, for any $\sigma_{\bfx_1}\in \RR$ such that
\begin{equation}
\label{eq:range}
 \mathfrak{L}(\bfx_1):= \frac{(\sigma_1-\sigma_2)(\sigma_2-\sigma_3)}{(\sigma_2-\sigma_3)-(\sigma_1-\sigma_3)w_3^2}
  \leq \sigma_{\bfx_1} \leq
  \frac{(\sigma_1+\sigma_2)(\sigma_2+\sigma_3)}{(\sigma_2+\sigma_3)+(\sigma_1-\sigma_3)w_3^2}=:\mathfrak{U}(\bfx_1),
\end{equation}
we have
\begin{equation}\label{eq:conjecture}
\|A  - \sigma_{\bfx_1} \bfx_1 \bfx_1^T\|=\sigma_2.
\end{equation}
\end{proposition}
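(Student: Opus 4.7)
Since both $A$ and $\bfx_1\bfx_1^T$ are symmetric, so is $B:=A-\sigma_{\bfx_1}\bfx_1\bfx_1^T$, and $\|B\|$ equals the largest absolute eigenvalue of $B$. I would work in the orthonormal eigenbasis $\bfu_1,\bfu_2,\bfu_3$ of $A$, writing $A=U\Sigma U^T$ as in \cref{ex:n=1_Hankel_angle}. Optimality of $\XX_1$ forces $w_2=0$ by \cref{thm:optimality_criterion_n=1}, so $\bfx_1\bfx_1^T$ has no $\bfu_2$-component and $B$ decomposes in this basis as the direct sum of the $1\times 1$ block $[\sigma_2]$ (on $\spn\{\bfu_2\}$) and the $2\times 2$ symmetric block
\[
M(s):=\begin{bmatrix}\sigma_1-sw_1^2 & -sw_1w_3\\ -sw_1w_3 & \sigma_3-sw_3^2\end{bmatrix},\qquad s:=\sigma_{\bfx_1},
\]
acting in the $(\bfu_1,\bfu_3)$-plane. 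Hence $\|B\|=\max(\sigma_2,\|M(s)\|)$, and the claim reduces to showing $\|M(s)\|\le\sigma_2$ whenever $s\in[\mathfrak{L}(\bfx_1),\mathfrak{U}(\bfx_1)]$, since $\sigma_2$ is in any case attained along $\bfu_2$.

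\textbf{Determinant conditions.} I would then rewrite $\|M(s)\|\le\sigma_2$ as the pair of matrix inequalities $\sigma_2 I\pm M(s)\succeq 0$; for $2\times 2$ symmetric matrices, this is equivalent to the non-negativity of both trace and determinant. A short expansion using $w_1^2+w_3^2=1$ makes both determinants affine in $s$, with zeros precisely at $\mathfrak{L}(\bfx_1)$ and $\mathfrak{U}(\bfx_1)$ respectively, and with positive slopes. The positivity of the slope for $\det(\sigma_2 I-M(s))$, namely $(\sigma_2-\sigma_3)-(\sigma_1-\sigma_3)w_3^2>0$, is a direct consequence of the optimality bound $w_3^2\le(\sigma_2^2-\sigma_3^2)/(\sigma_1^2-\sigma_3^2)$ recalled in \cref{ex:n=1_3D}; the slope for $\det(\sigma_2 I+M(s))$ is a sum of positive terms. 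Thus the two determinant conditions hold jointly if and only if $s\in[\mathfrak{L}(\bfx_1),\mathfrak{U}(\bfx_1)]$.

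\textbf{Main obstacle.} The only step with real algebraic content is to verify that the trace conditions $\sigma_1+\sigma_3-2\sigma_2\le s\le\sigma_1+\sigma_3+2\sigma_2$ are automatically implied by $s\in[\mathfrak{L}(\bfx_1),\mathfrak{U}(\bfx_1)]$. I would compute $\mathfrak{L}(\bfx_1)-(\sigma_1+\sigma_3-2\sigma_2)$ and $(\sigma_1+\sigma_3+2\sigma_2)-\mathfrak{U}(\bfx_1)$ directly; each turns out to be a fraction whose denominator is positive (by the same optimality bound above, or trivially in the $\mathfrak{U}$ case) and whose numerator simplifies to $(\sigma_2\mp\sigma_3)^2+(\sigma_1+\sigma_3\mp 2\sigma_2)(\sigma_1-\sigma_3)w_3^2$. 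The minus-sign numerator is manifestly a sum of non-negative terms; the plus-sign one is non-negative either trivially when $\sigma_1+\sigma_3\le 2\sigma_2$ (using $\mathfrak{L}(\bfx_1)\ge 0$) or again as a sum of non-negative terms. Once these two inequalities are established, both $\sigma_2 I\pm M(s)\succeq 0$ hold on $[\mathfrak{L},\mathfrak{U}]$, so $\|M(s)\|\le\sigma_2$, and the bound is attained by the eigenvalue $\sigma_2$ on $\bfu_2$, giving $\|B\|=\sigma_2$ as claimed.
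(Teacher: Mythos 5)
Your proof is correct and follows essentially the same route as the paper: both reduce, via $w_2=0$ from \cref{thm:optimality_criterion_n=1}, to the $2\times 2$ block $M(s)$ acting in the $(\bfu_1,\bfu_3)$-plane together with the eigenvalue $\sigma_2$ along $\bfu_2$, and then require the spectrum of $M(s)$ to lie in $[-\sigma_2,\sigma_2]$. The only difference is the finish: the paper computes the two eigenvalues $\lambda_\pm$ of $M(s)$ explicitly and imposes $-\sigma_2\le\lambda_\pm\le\sigma_2$, whereas you impose $\sigma_2 I\pm M(s)\succeq 0$ via trace and determinant; your determinant conditions do reduce (using $w_1^2+w_3^2=1$) exactly to $s\in[\mathfrak{L}(\bfx_1),\mathfrak{U}(\bfx_1)]$, and the extra step your route requires, namely that the trace constraints $\sigma_1+\sigma_3-2\sigma_2\le s\le\sigma_1+\sigma_3+2\sigma_2$ are subsumed, is handled soundly (positivity of the denominator of $\mathfrak{L}(\bfx_1)$ from the optimality bound of \cref{ex:n=1_3D}, and $\mathfrak{L}(\bfx_1)\ge 0$ when $2\sigma_2\ge\sigma_1+\sigma_3$). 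Two harmless slips: the coefficient of $s$ in $\det(\sigma_2 I+M(s))$ is negative, which is precisely why that condition yields the upper bound $\mathfrak{U}(\bfx_1)$ rather than a lower bound; and in your last paragraph the labels ``minus-sign'' and ``plus-sign'' are interchanged, since it is the plus-sign numerator $(\sigma_2+\sigma_3)^2+(\sigma_1+2\sigma_2+\sigma_3)(\sigma_1-\sigma_3)w_3^2$ that is manifestly non-negative, while the minus-sign one needs the case distinction on the sign of $\sigma_1+\sigma_3-2\sigma_2$. Neither slip affects the validity of the argument.
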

\begin{proof}
Without loss of generality, we can restrict ourselves to the case of $A$ being a diagonal matrix $\Sigma$ and $\bfu_i=\bfe_i$, the elements of the canonical basis.
Proving equality \cref{eq:conjecture} is equivalent to showing that the maximum modulus of the eigenvalues of the matrix
$\Sigma  - \sigma_{\bfx_1} \bfx_1 \bfx_1^T$ is equal to $\sigma_2$. Since $\XX_1$ is optimal for $\calA$, we know from \cref{thm:optimality_criterion_n=1} that $w_2=0$. 
Therefore, the eigenvalues of $\Sigma  - \sigma_{\bfx_1} \bfx_1 \bfx_1^T$ are given by $\sigma_2$ and by the eigenvalues of the submatrix obtained by removing the second row and the second column, i.e.,
\begin{equation}
\label{eq:Mw}
\begin{bmatrix}
\sigma_1-\sigma_{\bfx_1} w_1^2 & -\sigma_{\bfx_1} w_1 w_3 \\
-\sigma_{\bfx_1} w_1 w_3 & \sigma_3-\sigma_{\bfx_1} w_3^2
\end{bmatrix}.
\end{equation}
Then, proving equality \cref{eq:conjecture} is equivalent to showing that the eigenvalues of the matrix in \cref{eq:Mw} are less than or equal to $\sigma_2$ in modulus. A direct computation shows that its two (real) eigenvalues are given by
$$
\lambda_{\pm}=\frac{\sigma_1+\sigma_3-\sigma_{\bfx_1} \pm\sqrt{(\sigma_1-\sigma_3-\sigma_{\bfx_1})^2+ 4w_3^2(\sigma_1-\sigma_3)\sigma_{\bfx_1}}}{2}.
$$
Imposing
$
-\sigma_2\leq \lambda_{\pm}\leq \sigma_2
$
results in the range \cref{eq:range} for $\sigma_{\bfx_1}$.
\end{proof}

Let $m=3$. Recall from \cref{ineq:optimality-3D}--\cref{ineq:optimality-3D-angle} that $\XX_1$ is optimal for $\calA$ if and only if $w_2=0$ and
$$w_3^2 \le \sin^2(\extreme{\alpha}):=\frac{\sigma_2^2 - \sigma_3^2}{\sigma_1^2 - \sigma_3^2}.$$
Set $\extreme{\bfx}_1:=\cos(\extreme{\alpha})\bfu_1+\sin(\extreme{\alpha})\bfu_3$, then one can check that
\begin{equation*}
\mathfrak{L}(\bfx_1)\leq \mathfrak{L}(\extreme{\bfx}_1)=\sigma_1+\sigma_3= \mathfrak{U}(\extreme{\bfx}_1)\leq \mathfrak{U}(\bfx_1),
\end{equation*}
for any $\bfx_1$ such that its span is optimal for $\calA$.
Therefore, the range of values in \cref{eq:range} for the scaling factor $\sigma_{\bfx_1}$ is always non-empty. In particular, it always contains the value $\sigma_1+\sigma_3$. This means that there always exists at least one best low-rank approximation in any optimal space for $\calA$ (with $m=3$ and $n=1$). The classical truncated SVD approximation to $A$ corresponds to $\bfx_1=\bfu_1$, and in this case we have $\sigma_1-\sigma_2\leq \sigma_{\bfu_1}\leq \sigma_1+\sigma_2$. 
This is in agreement with \cref{eq:no-unique-spectral}.

\begin{example} \label{ex:n=1_Hankel_range} 
As a continuation of \cref{ex:n=1_Hankel_angle}, consider again the matrix
\begin{equation}\label{eq:A_Hankel_3}
A = \begin{bmatrix}
      1 & 0 & 1/4 \\
      0 & 1/4 & 0 \\
      1/4 & 0 & 1
    \end{bmatrix}.
\end{equation}
According to \cref{pro:n=1_symmetry}, any choice $\bfx_1=\cos(\alpha)\bfu_1+\sin(\alpha)\bfu_3$, with $|\alpha|\leq\extreme{\alpha}$, leads to a range of best rank-$1$ approximations to $A$ that are symmetric, i.e.,
\begin{equation*}
\sigma_{\bfx_1} \bfx_1 \bfx_1^T = \frac{\sigma_{\bfx_1}}{2} \begin{bmatrix}
 \cos^2(\alpha) & \sqrt{2} \cos(\alpha) \sin(\alpha) & \cos^2(\alpha) \\
 \sqrt{2} \cos(\alpha) \sin(\alpha) & 2 \sin^2(\alpha) & \sqrt{2} \cos(\alpha) \sin(\alpha) \\
 \cos^2(\alpha) & \sqrt{2} \cos(\alpha) \sin(\alpha) & \cos^2(\alpha)
\end{bmatrix},
\end{equation*}
for any $\sigma_{\bfx_1}\in \RR$ such that
\begin{equation*}
 \frac{1}{2-4\sin^2(\alpha)} \leq \sigma_{\bfx_1} \leq \frac{2}{1+\sin^2(\alpha)}.
\end{equation*}
The specific choice $\extreme{\bfx}_1$, corresponding to the maximum angle $\extreme{\alpha}$, gives a best rank-$1$ approximation that even preserves the Hankel structure of $A$, i.e.,
\begin{equation*}
\sigma_{\extreme{\bfx}_1} \extreme{\bfx}_1 \extreme{\bfx}_1^T = \frac{1}{2} \begin{bmatrix}
  1 & 1 & 1 \\
  1 & 1 & 1 \\
  1 & 1 & 1
\end{bmatrix},
\end{equation*}
since $\sigma_{\extreme{\bfx}_1}=3/2$. Similarly, the approximation obtained by taking the angle $-\extreme{\alpha}$ preserves the Hankel structure as well. According to \cite[Theorem~3.1]{Antoulas:1997}, these matrices are the only two Hankel-preserving best rank-$1$ approximations to $A$ in \cref{eq:A_Hankel_3}.
\end{example}

As shown in \cite{Antoulas:1997}, it is not always possible to find a Hankel-preserving best rank-$1$ approximation to a Hankel matrix $A$. When this is not possible one can ask the question of how well one can approximate $A$ with rank-$1$ Hankel matrices, and this has been studied in \cite{Knirsch:preprint}.

\section{Alternative optimality criteria}\label{sec:optimality-crit-alt}
In this section we provide some alternative optimality criteria that are useful in the case of large $n$. While this is not relevant for low-rank approximation, these results are still of independent interest for the Kolmogorov $n$-width. To simplify the exposition, we will in this section only consider matrices $A$ that are of full rank, i.e., $r=m$.
Recall that a necessary condition for an $n$-dimensional
space $\XX_n$ to be optimal for the $n$-width is that
it is orthogonal to $\bfu_{n+1}$ (see \cref{thm:Karlovitz-nec}).
This implies that the only optimal space for $n=m-1$ is given in \cref{eq:optimal:N-1}.

Suppose now that $n \le m-2$ and that
$\XX_n$ is orthogonal to $\bfu_{n+1}$.
Let us denote the orthogonal 
complement of $\XX_n \oplus \bfu_{n+1}$ in $\RR^m$ by $\YY_{m-n-1}$, and suppose that
we can represent it in the form
\begin{equation*}
\YY_{m-n-1}=\spn\{\bfy_1,\ldots,\bfy_{m-n-1}\},
\end{equation*}
where $\bfy_1,\ldots,\bfy_{m-n-1}$ are orthonormal vectors in $\RR^m$.
We can express these vectors as
\begin{equation*}
 \bfy_j = \sum_{i=1}^m q_{ij} \bfu_i, \quad j=1,\ldots,m-n-1,
\end{equation*}
for coefficients $q_{ij} \in \RR$, $j=1,\ldots,m-n-1$,
where now
$$ q_{n+1,j} = 0, \quad j=1,\ldots, m-n-1. $$
Denoting by $Q\in\RR^{m,m-n-1}$ the matrix
\begin{equation}
\label{eq:W-alt}
[q_{ij}]_{i=1,\ldots,m,j=1,\ldots,m-n-1},
\end{equation}
we obtain the following alternative characterization of optimality for $\XX_n$.

\begin{lemma}\label{lem:Eisaneigenvalue_alt_reduce}
Let $Q$ be the matrix in \cref{eq:W-alt}.
The subspace
$\XX_n$ is optimal for ${\calA}$ if and only if $\XX_n \perp \bfu_{n+1}$ and the largest eigenvalue of
\begin{equation*}
Q^T \Sigma^2 Q
\end{equation*}
is at most $\sigma_{n+1}^2$.
\end{lemma}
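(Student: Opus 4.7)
The plan is to use the variational characterization of $E(\calA,\XX_n)$ already derived in \cref{eq:max} and decompose the direction of optimization according to the orthogonal splitting $\RR^m = \XX_n \oplus \spn\{\bfu_{n+1}\} \oplus \YY_{m-n-1}$.

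First, by \cref{thm:Karlovitz-nec}, the condition $\XX_n \perp \bfu_{n+1}$ is already necessary, so we may assume it from the outset and focus on showing that $E(\calA,\XX_n)=\sigma_{n+1}$ is equivalent to the eigenvalue bound on $Q^T\Sigma^2 Q$. Since $\XX_n \perp \bfu_{n+1}$, the direct sum above makes sense and any $\bfz \perp \XX_n$ can be written uniquely as $\bfz = \alpha \bfu_{n+1} + \bfy$ with $\bfy = \sum_j \beta_j \bfy_j$.

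Next I would compute numerator and denominator in \cref{eq:max}. Using $AA^T = U\Sigma^2 U^T$ and that $U^T\bfu_{n+1}=\bfe_{n+1}$ while $U^T\bfy = Q\bfbeta$, I find
\begin{equation*}
\bfz^T AA^T \bfz = (\alpha \bfe_{n+1} + Q\bfbeta)^T \Sigma^2 (\alpha \bfe_{n+1} + Q\bfbeta).
\end{equation*}
The cross-term $\alpha\, \bfe_{n+1}^T \Sigma^2 Q\bfbeta$ vanishes because the $(n+1)$-st row of $Q$ is zero by construction, yielding
\begin{equation*}
\bfz^T AA^T \bfz = \alpha^2 \sigma_{n+1}^2 + \bfbeta^T Q^T \Sigma^2 Q \bfbeta.
\end{equation*}
For the denominator, since the columns of $Q$ satisfy $Q^TQ = I_{m-n-1}$ (the $\bfy_j$ are orthonormal and $U$ is orthogonal), we get $\bfz^T\bfz = \alpha^2 + \|\bfbeta\|^2$.

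Putting this together gives
\begin{equation*}
E(\calA,\XX_n)^2 = \max_{(\alpha,\bfbeta)\neq 0} \frac{\alpha^2 \sigma_{n+1}^2 + \bfbeta^T Q^T \Sigma^2 Q \bfbeta}{\alpha^2+\|\bfbeta\|^2},
\end{equation*}
which is the largest eigenvalue of the block-diagonal matrix $\mathrm{diag}(\sigma_{n+1}^2,\, Q^T\Sigma^2 Q)$, namely $\max\bigl(\sigma_{n+1}^2,\, \lambda_{\max}(Q^T\Sigma^2 Q)\bigr)$.

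Finally, since $d_n(\calA) = \sigma_{n+1}$ by \cref{eq:width=sv}, the space $\XX_n$ (already known to satisfy $\XX_n\perp \bfu_{n+1}$) is optimal if and only if this maximum equals $\sigma_{n+1}^2$, i.e., if and only if $\lambda_{\max}(Q^T\Sigma^2 Q)\le \sigma_{n+1}^2$. There is no real obstacle here; the only point requiring care is to verify that the cross-term vanishes, which is exactly the purpose of introducing $\YY_{m-n-1}$ as the complement of $\XX_n\oplus\spn\{\bfu_{n+1}\}$ rather than of $\XX_n$ alone.
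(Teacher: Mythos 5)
Your proof is correct and follows essentially the same route as the paper: both use the characterization \cref{eq:max}, decompose $\bfz\perp\XX_n$ as a component along $\bfu_{n+1}$ plus a component in $\YY_{m-n-1}$, and reduce the maximization over $\YY_{m-n-1}$ to the largest eigenvalue of $Q^T\Sigma^2 Q$. The only difference is presentational: you verify the vanishing cross-term in coordinates via the zero $(n+1)$-st row of $Q$, whereas the paper invokes Karlovitz's convex-combination bound to split off the $\sigma_{n+1}^2$ term.
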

\begin{proof}
Recall from \cref{eq:max} that
$$
E(\calA, \XX_n) =
   \max_{\bfz \perp \XX_n} \sqrt{\frac{\bfz^TAA^T\bfz}{\bfz^T\bfz}}.
$$
Following the argument of Karlovitz in \cite[Theorem~1]{Karlovitz:76}, 
any $\bfz$ orthogonal to $\XX_n$ can be expressed uniquely as
$\bfz = \bfy \oplus \bfx$, where $\bfy \in \YY_{m-n-1}$
and $\bfx \in \spn\{\bfu_{n+1}\}$.
Then, 
\begin{equation*}
\frac{\bfz^T A A^T \bfz}{\bfz^T \bfz}
   = \frac{\bfx^T \bfx \sigma_{n+1}^2 + \bfy^T A A^T \bfy}
         {\bfx^T \bfx + \bfy^T \bfy}
  \le \max \left\{ \sigma_{n+1}^2, 
   \frac{\bfy^T A A^T \bfy} {\bfy^T \bfy} \right\}, 
\end{equation*}
since it is a convex combination of $\sigma_{n+1}^2$ and $ \frac{\bfy^T A A^T \bfy} {\bfy^T \bfy}$.
We conclude that
\begin{equation*}
E(\calA, \XX_n) =
  \max \left\{ \sigma_{n+1}, 
   \max_{\bfy \in \YY_{m-n-1}} \sqrt{\frac{\bfy^TAA^T\bfy}{\bfy^T\bfy}}
  \right\}. 
\end{equation*}

Any $\bfy \in \YY_{m-n-1}$ can be represented as
$$ \bfy = \sum_{j=1}^{m-n-1} c_j \bfy_j, $$
for coefficients $c_1,\ldots,c_{m-n-1}$.
Setting $\bfc := [c_1,\ldots,c_{m-n-1}]^T$, we have
$$ \bfy^T \bfy = \sum_{j=1}^{m-n-1} c_j^2 = \bfc^T \bfc. $$
Setting
$Y := [\bfy_1,\ldots,\bfy_{m-n-1}]$, we also have
$$ \bfy = Y \bfc = U Q \bfc, $$
and so
\begin{equation*}
 \bfy^T A A^T \bfy = \bfc^T Q^T U^T A A^T U Q \bfc
                     = \bfc^T Q^T \Sigma^2 Q \bfc.
\end{equation*}
Therefore,
\begin{equation*}
 \max_{\bfy \in \YY_{m-n-1}}
      \frac{\bfy^T A A^T \bfy}{\bfy^T \bfy}
   = \max_{\bfc \in \RR^{m-n-1}} 
    \frac{\bfc^T Q^T \Sigma^2 Q\bfc}{\bfc^T\bfc},
\end{equation*}
which is the largest eigenvalue of $Q^T \Sigma^2 Q$.
\end{proof}

Suppose now that $n = m-2$ and that
$\XX_{m-2}$ is orthogonal to $\bfu_{m-1}$.
Let $\YY_1$ be the orthogonal complement to $\XX_{m-2} \oplus \bfu_{m-1}$
in $\RR^m$.
Let $\bfy_1$ be a unit vector in $\YY_1$ (which is unique
up to a change of sign).
We can express $\bfy_1$ in the basis $\bfu_1,\ldots,\bfu_m$,
and write
\begin{equation*}
 \bfy_1 = \sum_{i=1}^m q_i \bfu_i,
\end{equation*}
for coefficients $q_1,\ldots,q_m \in \RR$ such that $\sum_{i=1}^m q_i^2 = 1$ and $q_{m-1} = 0$.

\begin{theorem}\label{thm:N-2}
The subspace $\XX_{m-2}$ is optimal if and only if $q_{m-1}=0$ and 
$$ \sum_{\substack{i=1 \\ i \ne m-1}}^m q_i^2 \sigma_i^2 \le
     \sigma_{m-1}^2. $$
\end{theorem}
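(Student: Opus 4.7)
The plan is to apply \cref{lem:Eisaneigenvalue_alt_reduce} in the special case $n=m-2$, where the complement $\YY_1$ is one-dimensional and spanned by the single unit vector $\bfy_1$. In this case the matrix $Q\in\RR^{m,m-n-1}$ from \cref{eq:W-alt} collapses to a single column vector $Q=[q_1,\ldots,q_m]^T$, so $Q^T\Sigma^2 Q$ is a $1\times 1$ matrix whose only entry, and hence its only eigenvalue, is the scalar $\sum_{i=1}^m q_i^2\sigma_i^2$.

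Next, I would invoke $q_{m-1}=0$, which is built into the construction: since $\bfy_1$ lies in the orthogonal complement of $\XX_{m-2}\oplus\bfu_{m-1}$, in particular $\bfy_1\perp\bfu_{m-1}$. Hence the scalar above reduces to $\sum_{i\ne m-1} q_i^2\sigma_i^2$, and the requirement of \cref{lem:Eisaneigenvalue_alt_reduce} that the largest eigenvalue of $Q^T\Sigma^2 Q$ be at most $\sigma_{n+1}^2=\sigma_{m-1}^2$ translates verbatim into the inequality claimed in the theorem.

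Finally, I would note that the orthogonality condition $\XX_{m-2}\perp\bfu_{m-1}$ appearing in \cref{lem:Eisaneigenvalue_alt_reduce} is precisely what allows the setup used here to be carried out, and is equivalent, within that setup, to the stated $q_{m-1}=0$: the forward direction is how $\bfy_1$ was constructed, while conversely $q_{m-1}=0$ together with $\bfy_1\in(\XX_{m-2})^{\perp}$ forces $\bfu_{m-1}\in (\XX_{m-2}\oplus\YY_1)^{\perp}$. I do not anticipate any genuine obstacle here: the theorem is a direct specialization of the preceding lemma to the one-dimensional complement case, and the only observation needed beyond quoting the previous result is the trivial fact that a $1\times 1$ symmetric matrix has a single eigenvalue equal to its unique entry.
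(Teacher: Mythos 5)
Your proposal is correct and follows essentially the same route as the paper: the paper's proof is likewise a direct specialization of \cref{lem:Eisaneigenvalue_alt_reduce} to $n=m-2$, observing that $Q^T\Sigma^2 Q$ is then the $1\times 1$ matrix with entry $\sum_{i\ne m-1} q_i^2\sigma_i^2$. Your extra remarks on the equivalence of $q_{m-1}=0$ with the orthogonality condition are harmless elaboration of what the setup already builds in.
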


\begin{proof}
This is just an application of \cref{lem:Eisaneigenvalue_alt_reduce}
for $n=m-2$, in which case the matrix $Q^T\Sigma^2 Q$
has the single element
\begin{align*}
 \sum_{\substack{i=1 \\ i \ne m-1}}^m q_i^2 \sigma_i^2.
\end{align*}
\end{proof}

\begin{example}\label{ex:n=1_3D_alt}
Let $m=3$ and let $\XX_1$ be a $1$-dimensional subspace of $\RR^3$
that is orthogonal to $\bfu_2$, and let
$\bfy_1 = q_1 \bfu_1 + q_3 \bfu_3$ be a unit vector orthogonal to $\XX_1$.
From \cref{thm:N-2} it follows that $\XX_1$ is optimal
for $\calA$ if and only if
$$ q_1^2 \sigma_1^2 + q_3^2 \sigma_3^2 \le \sigma_2^2. $$
If
$$ \XX_1 = \spn\{\cos(\alpha)\bfu_1+\sin(\alpha)\bfu_3\}, $$
then
$$ \YY_1 = \spn\{-\sin(\alpha)\bfu_1+\cos(\alpha)\bfu_3\}, $$
and $(q_1,q_3) = \pm (-\sin(\alpha),\cos(\alpha))$, thus
the optimality condition can be expressed as
$$ \sin^2(\alpha) \le \frac{\sigma_2^2 - \sigma_3^2}
                          {\sigma_1^2 - \sigma_3^2}. $$
This agrees with \cref{ex:n=1_3D}.
\end{example}

\section{Totally positive matrices}\label{sec:tp-matrices}
Melkman and Micchelli studied the $n$-width problem for a certain class of matrices, and in this section we compare their results with the optimality criteria in \cref{sec:optimality-crit,sec:1-width}. If $A$ is strictly totally positive, i.e., all its minors are positive, then two optimal spaces for $\calA$ are constructed in \cite[Section~4]{Melkman:78}. These two spaces are in general different from the classical optimal space $\spn\{\bfu_1,\ldots,\bfu_n\}$. We will describe the first of these optimal spaces here. The second will be discussed in the next section.

When $A$ is strictly totally positive it follows from a theorem of Gantmacher and Krein \cite{Gantmacher:2002} that the singular values are positive and distinct,
$$\sigma_1>\sigma_2>\dots>\sigma_m>0,$$
and the right singular vectors of $A$ have the following sign properties,
\begin{equation}\label{eq:STP-sign}
S^+(\bfv_{n+1})=S^-(\bfv_{n+1})=n, \quad n=0,\ldots,m-1.
\end{equation}
Here $S^-(\bfv)$ denotes the actual sign changes of the vector $\bfv$, where zero components are discarded and $S^+(\bfv)$ is the maximum number of sign changes obtainable by adding $1$ or $-1$ to the zero components of $\bfv$.
It follows from \cref{eq:STP-sign} that $v_{n+1,1}v_{n+1,m}\neq 0$ and we can assume, without loss of generality, that $v_{n+1,1}>0$. Moreover, using \cref{eq:STP-sign}, there exist indices $0=\ell_0<\ell_1<\dots<\ell_n<\ell_{n+1}=m$, denoting the sign changes in $\bfv_{n+1}$, i.e, such that
$$
v_{n+1,i}(-1)^j\geq 0, \quad \ell_j<i\leq \ell_{j+1},\quad j=0,1,\ldots,n.
$$
To simplify the exposition, let us assume that the vector $\bfv_{n+1}$ has no zero components; see \cite[Section~4]{Melkman:78} for the general case. The index $\ell_j$ is then the index before the sign change, i.e., such that $v_{n+1,\ell_j}v_{n+1,\ell_j+1}<0$.
For each $j=1,2,\ldots,n$, define the $m$-dimensional vector $\bfs_j$ by
\begin{equation*}
s_{j,k}:=\begin{cases}
  1/|v_{n+1,k}|, &k=\ell_j,\ell_{j}+1,
  \\
  0, &\text{otherwise}.
  \end{cases}
\end{equation*}
Then, $\bfs_j\perp\bfv_{n+1}$ for each $j=1,\ldots,n$, and Melkman and Micchelli proved the following result \cite[Theorem~3.1]{Melkman:78}.
\begin{theorem}\label{thm:Melkman}
If $A$ is a strictly totally positive matrix, then
\begin{equation}\label{eq:Melkman1}
\XX_n^1:=\spn\{A\bfs_1,\ldots,A\bfs_n\}
\end{equation}
is an optimal subspace for $\calA:=\{A\bfx: \|\bfx\|\leq 1\}$.
\end{theorem}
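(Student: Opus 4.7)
My plan is to apply the characterization of \cref{lem:optimality_criterion} and verify its two clauses for $\XX_n^1$. The first clause, $\XX_n^1 \perp \bfu_{n+1}$, is immediate: for each $j$,
\begin{equation*}
(A\bfs_j)^T \bfu_{n+1} = \bfs_j^T A^T \bfu_{n+1} = \sigma_{n+1}\,\bfs_j^T \bfv_{n+1} = 0,
\end{equation*}
since $\bfs_j \perp \bfv_{n+1}$ by construction.

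For the second clause I would equivalently aim for the bound $E(\calA, \XX_n^1) \leq \sigma_{n+1}$, using the Rayleigh-quotient form in \cref{eq:max} and recalling $d_n(\calA) = \sigma_{n+1}$ from \cref{eq:width=sv}. With $S_n := \spn\{\bfs_1, \ldots, \bfs_n\}$, the constraint $\bfz \perp \XX_n^1 = A(S_n)$ is equivalent to $A^T \bfz \in S_n^\perp$. Writing $\bfz = \sum_i z_i \bfu_i$ so that $\bfw := A^T \bfz = \sum_i z_i \sigma_i \bfv_i$, the task reduces to showing
\begin{equation*}
\sum_{i=1}^m z_i^2 \sigma_i^2 \leq \sigma_{n+1}^2 \sum_{i=1}^m z_i^2
\end{equation*}
under the constraints $\bfw^T \bfs_j = 0$ for $j = 1, \ldots, n$. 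Since $\bfs_j$ is supported only on the two sign-change indices $\ell_j, \ell_j+1$ of $\bfv_{n+1}$, and those two entries of $\bfv_{n+1}$ have opposite signs, each such orthogonality forces $w_{\ell_j}$ and $w_{\ell_j+1}$ to be of opposite sign (or both to vanish); in the generic situation this imposes $n$ distinct sign changes on $\bfw$.

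The main obstacle is converting these $n$ forced sign changes into the Rayleigh-quotient bound, and this is where strict total positivity of $A$ enters essentially. The natural tool is the Gantmacher--Krein variation-diminishing property underlying \cref{eq:STP-sign}: any nonzero element of $\spn\{\bfv_1,\ldots,\bfv_k\}$ has at most $k-1$ strict sign changes. Arguing by contradiction, if the Rayleigh quotient exceeded $\sigma_{n+1}^2$ then the coefficients $z_1, \ldots, z_n$ would have to dominate, pushing $\bfw$ effectively into $\spn\{\bfv_1,\ldots,\bfv_n\}$ and thereby capping its number of strict sign changes at $n-1$, which clashes with the $n$ sign reversals forced at the $\ell_j$. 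Making the ``dominance'' step quantitative, and treating the possibility of zero entries in $\bfv_{n+1}$ (which is why the general definition of $\bfs_j$ in \cite{Melkman:78} is more elaborate than the one recalled above), is the delicate technical core of the argument and would follow the route laid out in \cite[Section~4]{Melkman:78}.
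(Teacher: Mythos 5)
Your first step is fine: the orthogonality $\XX_n^1\perp\bfu_{n+1}$ follows exactly as you say from $A^T\bfu_{n+1}=\sigma_{n+1}\bfv_{n+1}$ and $\bfs_j\perp\bfv_{n+1}$, and the reduction of optimality to the bound $\max_{\bfz\perp\XX_n^1}\|A^T\bfz\|/\|\bfz\|\le\sigma_{n+1}$ via \cref{eq:max} and \cref{eq:width=sv} is also correct (indeed, once you have that bound the orthogonality clause is not even needed). But that bound \emph{is} the theorem: it is the only place where strict total positivity enters, and your proposal stops exactly there. The ``dominance'' contradiction you sketch does not work as stated. If the Rayleigh quotient of $\bfz$ exceeds $\sigma_{n+1}^2$, all you can conclude is that the coefficients $z_1,\ldots,z_n$ are not all zero; this does not place $\bfw=A^T\bfz$ in $\spn\{\bfv_1,\ldots,\bfv_n\}$, nor ``effectively'' in it in any sense to which the Gantmacher--Krein oscillation property applies. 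That property caps the number of sign changes of \emph{exact} nonzero elements of $\spn\{\bfv_1,\ldots,\bfv_k\}$; it is not stable under adding a small component in the remaining directions, so no quantitative version of your dominance step comes for free. Moreover, the sign information you extract is weak: the constraints $\bfw^T\bfs_j=0$ only force $w_{\ell_j}$ and $w_{\ell_j+1}$ to have opposite signs \emph{or both vanish}, so outside the ``generic situation'' you do not even get $n$ strict sign changes, and $S^-$ discards zero entries --- precisely the degenerate cases that the more elaborate definition of the $\bfs_j$ in \cite[Section~4]{Melkman:78} is designed to handle. Deferring this core step to \cite{Melkman:78} means the proposal is a plausible plan plus a citation, not a proof.

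For comparison, the paper itself offers no proof of \cref{thm:Melkman} either: it states the result as \cite[Theorem~3.1]{Melkman:78} and only verifies, as you did, that $\XX_n^1$ satisfies the necessary condition $\XX_n^1\perp\bfu_{n+1}$ of \cref{thm:Karlovitz-nec}. So your write-up is consistent with the paper's treatment, but judged as a standalone proof it has a genuine gap at the decisive inequality, and the bridge you propose between the forced sign reversals of $A^T\bfz$ and the spectral bound would need an entirely different (and genuinely delicate) argument of Melkman--Micchelli type to be made rigorous.
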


As a consequence of the above result, if we use a Gram--Schmidt process to find an orthonormal basis for $\XX_n^1$, then we immediately obtain a best rank-$n$ approximation to $A$ by applying \cref{thm:low-rank-1}.

Note that the space $\XX_n^1$ in \cref{eq:Melkman1} satisfies the necessary condition $\XX_n^1\perp\bfu_{n+1}$ (see \cref{thm:Karlovitz-nec}) since $\bfs_j\perp\bfv_{n+1}$ for each $j=1,\ldots,n$. 

\begin{example}
Consider the case $n=1$ and $m=3$. In view of \cref{thm:optimality_criterion_n=1} and \cref{ex:n=1_3D} it would be interesting to check how far the optimal subspace in \cref{eq:Melkman1} is from the classical space $\spn\{\bfu_1\}$ for different choices of $A$.
Let us take what is perhaps one of the simplest possible choices of a strictly totally positive matrix, the Vandermonde matrix obtained by interpolating at the points $1,2,3$:
\begin{equation*}
A= \begin{bmatrix}
      1 & 1 & 1 \\
      1 & 2 & 4 \\
      1 & 3 & 9 
     \end{bmatrix}.
\end{equation*}
 In this case, it can be checked that the angle between \cref{eq:Melkman1} and the space spanned by $\bfu_1$ is less than $0.171^\circ$, while the maximum angle for an optimal space as in \cref{ex:n=1_3D} is greater than $6.695^\circ$.
\end{example}

\section{Sequence of optimal subspaces}\label{sec:seq-subspaces}
In \cref{thm:optimality_criterion3} we obtained an equivalent condition for optimality that allowed us to classify all optimal spaces of dimension $n=1$ for any matrix $A$ in \cref{thm:optimality_criterion_n=1}. However, as $n$ increases it becomes trickier to apply the optimality criterion in \cref{thm:optimality_criterion3} for an arbitrary matrix $A$. On the other hand, as we saw in the last section, there exist matrices where one can obtain an optimal $n$-dimensional space for $\calA$ using specific properties of the matrix $A$. In this section we prove that, given some initial optimal space $\XX_n^1$, we can obtain a whole sequence of optimal spaces $\XX_n^p$, $p\geq 1$. Moreover, this sequence converges to the classical optimal space as $p\to\infty$. The arguments here hold for any matrix $A$ and are based on those found in \cite{Floater:2017,Floater:2018,Sande:2019} for an integral operator in $L^2$.

Let $\XX_n^1$ and $\YY_n^1$ be any $n$-dimensional subspaces of $\RR^m$, and define the sequence of subspaces $\XX_n^p$ and $\YY_n^p$ by 
\begin{equation}\label{eq:seq}
\XX_n^{p}:=A(\YY_n^{p-1}),\quad \YY_n^{p}:=A^T(\XX_n^{p-1}), \quad p=2,3,\ldots.
\end{equation}
Then, similar to \cite[Lemma~1]{Floater:2017}, we have the following lemma.
\begin{lemma}\label{lem:lifting}
For any matrix $A$ and any subspaces $\XX_n^1$ and $\YY_n^1$, we have
\begin{align*}
 E(\calA, \XX_n^p) &\le E(\calA_T, \YY_n^{p-1}),
 \\
 E(\calA_T, \YY_n^p) &\le E(\calA, \XX_n^{p-1}),
\end{align*}
for all $p\geq 2$.
\end{lemma}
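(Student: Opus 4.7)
The plan is to work with the dual characterization of the distance established earlier, namely \cref{eq:max}, which expresses $E(\calA,\XX_n)$ as a maximum of $\|A^T\bfz\|/\|\bfz\|$ over vectors $\bfz$ orthogonal to $\XX_n$, and correspondingly $E(\calA_T,\YY_n)$ as a maximum of $\|A\bfz\|/\|\bfz\|$ over vectors $\bfz\perp\YY_n$. The defining relation $\XX_n^p=A(\YY_n^{p-1})$ creates an orthogonality transfer under $A^T$: if $\bfz\perp A(\YY_n^{p-1})$, then $\bfz^T A\bfy=0$ for every $\bfy\in\YY_n^{p-1}$, which is the same as $(A^T\bfz)^T\bfy=0$, so $A^T\bfz\perp\YY_n^{p-1}$. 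This is the key structural observation; combined with a single Cauchy--Schwarz estimate it will yield the first inequality.

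More precisely, I would fix $\bfz\perp\XX_n^p$ with $A^T\bfz\neq 0$ (the case $A^T\bfz=0$ contributes $0$ to the maximum and may be discarded) and set $\bfw:=A^T\bfz$, which by the observation above lies in the orthogonal complement of $\YY_n^{p-1}$. The identity $\|A^T\bfz\|^2=\bfz^TAA^T\bfz=\bfz^T(A\bfw)$ together with the Cauchy--Schwarz inequality gives $\|A^T\bfz\|^2\le\|\bfz\|\,\|A\bfw\|$, and dividing by $\|\bfz\|\,\|A^T\bfz\|=\|\bfz\|\,\|\bfw\|$ yields
\begin{equation*}
\frac{\|A^T\bfz\|}{\|\bfz\|}\le \frac{\|A\bfw\|}{\|\bfw\|}\le \max_{\bfw'\perp\YY_n^{p-1}}\frac{\|A\bfw'\|}{\|\bfw'\|}=E(\calA_T,\YY_n^{p-1}).
\end{equation*}
Taking the maximum of the left-hand side over all admissible $\bfz\perp\XX_n^p$ and invoking \cref{eq:max} delivers $E(\calA,\XX_n^p)\le E(\calA_T,\YY_n^{p-1})$.

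The second inequality is obtained by the same argument with the roles of $A$ and $A^T$ (and of $\XX$ and $\YY$) interchanged, using $\YY_n^p=A^T(\XX_n^{p-1})$ and the fact that $\calA$ and $\calA_T$ have the same singular values and a symmetric definition. No genuine obstacle arises: the only mild care is to exclude the trivial case $A^T\bfz=0$ (and analogously $A\bfw=0$) before dividing, and to note that the Cauchy--Schwarz step is tight exactly in the direction that makes $\bfz$ and $A\bfw$ parallel, which is the mechanism driving the ``lifting'' effect that later will force convergence of the sequence to the classical optimal space.
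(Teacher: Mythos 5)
Your argument is correct, but it takes a genuinely different route from the paper. The paper proves the inequality by exhibiting a concrete competitor map into the image space: with $P_n$ the orthogonal projection onto $\XX_n^{p-1}$, the map $A^TP_n$ has image $\YY_n^p=A^T(\XX_n^{p-1})$, so $E(\calA_T,\YY_n^p)\le\max_{\|\bfx\|\le1}\|(A^T-A^TP_n)\bfx\|$, and then the adjoint-norm identity $\|A^T(I-P_n)\|=\|(I-P_n)A\|$ closes the proof in two lines (this is the argument that transfers verbatim to integral operators in $L^2$, which is where the paper imports it from). You instead stay on the dual side: you use the characterization \cref{eq:max} of $E$ as a maximum of $\|A^T\bfz\|/\|\bfz\|$ over the orthogonal complement, observe that $\bfz\perp A(\YY_n^{p-1})$ forces $A^T\bfz\perp\YY_n^{p-1}$, and then a single Cauchy--Schwarz estimate $\|A^T\bfz\|^2=\bfz^T A(A^T\bfz)\le\|\bfz\|\,\|A(A^T\bfz)\|$ shows that $A^T\bfz$ is an admissible, at least as good, test vector for the previous space. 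Both proofs are short and neither uses that the subspaces have dimension exactly $n$ (important, since $\XX_n^p$ may degenerate), so your use of \cref{eq:max} for these possibly lower-dimensional spaces is harmless. What the paper's version buys is directness and immediate generality (no case analysis, no division); what yours buys is a more explicit mechanism: it identifies the orthogonality transfer under $A^T$ and the Cauchy--Schwarz step whose equality case ($\bfz$ parallel to $AA^T\bfz$) hints at why the iteration drifts toward the singular vectors, foreshadowing \cref{thm:eig}. Your handling of the degenerate case $A^T\bfz=0$ before dividing is the right (and only) care needed, and the symmetric argument for the second inequality is fine; note only that the remark about $\calA$ and $\calA_T$ having the same singular values is not needed for this lemma, but first enters in \cref{thm:optimal-seq}.
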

\begin{proof}
The two inequalities are analogous and so we only prove the last one.
Let $P_n$ be the orthogonal projection onto $\XX_n^{p-1}$.
Then, the image of $A^TP_n$ is $\YY_n^p=A^T(\XX_n^{p-1})$ and so
 \begin{align*}
  E(\calA_T,\YY_n^p) \leq \max_{\|\bfx\|\leq1} \|(A^T-A^T P_n)\bfx\| 
    = \max_{\|\bfx\|\leq1} \|(A-P_n A)\bfx\| = E(\calA,\XX_n^{p-1}).
 \end{align*}
\end{proof}

Since $d_n(\calA)=d_n(\calA_T)=\sigma_{n+1}$, we can apply \cref{lem:lifting} in an induction argument on $p$ to obtain the following theorem.
\begin{theorem}\label{thm:optimal-seq}
Suppose the subspace $\XX_n^1$ is optimal for $\calA$ and $\YY_n^1$ is optimal for $\calA_T$. Then, 
\begin{itemize}
 \item the subspaces $\XX_n^p$ in \cref{eq:seq} are optimal for $\calA$, and
 \item the subspaces $\YY_n^p$ in \cref{eq:seq} are optimal for $\calA_T$,
\end{itemize}
for all $p\geq 2$.
\end{theorem}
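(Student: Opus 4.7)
The plan is to prove the two statements simultaneously by a short induction on $p$, leaning on two ingredients already available: \cref{lem:lifting}, which gives the key cross-inequalities
\[
E(\calA,\XX_n^p) \le E(\calA_T,\YY_n^{p-1}), \qquad E(\calA_T,\YY_n^p) \le E(\calA,\XX_n^{p-1}),
\]
and the identity $d_n(\calA)=d_n(\calA_T)=\sigma_{n+1}$, where the second equality follows from \cref{eq:width=sv} together with the fact that $A$ and $A^T$ share the same singular values. Optimality of an $n$-dimensional subspace $\XX_n$ for $\calA$ is by definition $E(\calA,\XX_n)=d_n(\calA)=\sigma_{n+1}$, and analogously for $\calA_T$.

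For the base case $p=1$ there is nothing to do, since $\XX_n^1$ and $\YY_n^1$ are optimal by hypothesis. For the inductive step, assume $\XX_n^{p-1}$ is optimal for $\calA$ and $\YY_n^{p-1}$ is optimal for $\calA_T$, i.e.\ both distances equal $\sigma_{n+1}$. Applying \cref{lem:lifting} then yields
\[
E(\calA,\XX_n^p) \le E(\calA_T,\YY_n^{p-1}) = \sigma_{n+1}, \qquad E(\calA_T,\YY_n^p) \le E(\calA,\XX_n^{p-1}) = \sigma_{n+1}.
\]
Since $E(\calA,\cdot)$ and $E(\calA_T,\cdot)$ are always bounded below by their respective Kolmogorov $n$-widths, namely $\sigma_{n+1}$, both inequalities must be equalities. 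Hence $\XX_n^p$ is optimal for $\calA$ and $\YY_n^p$ is optimal for $\calA_T$, closing the induction.

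The only point that requires care, and the item I expect to be the main conceptual obstacle, is that \cref{eq:seq} a priori produces a subspace of dimension at most $n$; for the word ``optimal'' to apply strictly as phrased, one wants $\dim \XX_n^p=\dim \YY_n^p=n$. Under the standing hypothesis $\sigma_{n+1}>0$ this is automatic: any optimal space has dimension $n$ (a strictly smaller subspace would force $E(\calA,\XX_n^p)\ge \sigma_n>\sigma_{n+1}$, contradicting what the induction just established), and if necessary the argument can be phrased by choosing any $n$-dimensional space containing the image, which still satisfies the same upper bound on $E$. With this caveat settled, the inductive argument above delivers the theorem.
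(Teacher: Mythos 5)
Your proof is correct and takes essentially the same route as the paper's: an induction on $p$ that combines the two cross-inequalities of \cref{lem:lifting} with $d_n(\calA)=d_n(\calA_T)=\sigma_{n+1}$, using the $n$-width as a lower bound to force equality. Your closing remark on the dimension is also consistent with the paper, which notes after the theorem that the dimension of $\XX_n^p$ could drop only if $d_k(\calA)=d_n(\calA)$ for some $k<n$, a situation excluded under the standing assumption \cref{eq:sigmas-unique-weak}, exactly as you argue.
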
 
\begin{proof}
Assume $\XX_n^{p-1}$ is optimal for $\calA$ and $\YY_n^{p-1}$ is optimal for $\calA_T$. Then, using \cref{lem:lifting}, we have
\begin{align*}
 E(\calA, \XX_n^p) &\le E(\calA_T, \YY_n^{p-1})=d_n(\calA_T)=d_n(\calA),
 \\
 E(\calA_T, \YY_n^p) &\le E(\calA, \XX_n^{p-1})=d_n(\calA)=d_n(\calA_T),
\end{align*}
and so $\XX_n^{p}$ is optimal for $\calA$ and $\YY_n^{p}$ is optimal for $\calA_T$. The result now follows from induction on $p$.
\end{proof}

Note that for $p\geq 2$, the spaces $\XX_n^p$ and $\YY_n^p$ could in general have dimension less than $n$, but they are still optimal for the $n$-width problem whenever $\XX_n^1$ and $\YY_n^1$ are optimal. In fact, if $\XX_n^p$ has dimension $k$, $0\leq k<n$, then $d_k(\calA)$ must equal $d_n(\calA)$ by definition of the $n$-width.

\begin{example}
Let $A$ be a strictly totally positive matrix. Then, by definition, $A^T$ is also strictly totally positive, and if we construct the vectors $\bft_j$, $j=1,\ldots,n$, in a way analogous to the $\bfs_j$ in the previous section, it follows from \cref{thm:Melkman} that
\begin{equation*}
\YY_n^1:=\spn\{A^T\bft_1,\ldots,A^T\bft_n\}
\end{equation*}
is optimal for $\calA_T$. Using \cref{thm:optimal-seq} we then have that, for $p\geq 1$, the spaces
\begin{equation}\label{eq:optimal-seq-TP}
\XX_n^p=\begin{cases}
  \spn\{(AA^T)^i A\bfs_1,\ldots, (AA^T)^i A\bfs_n\}, & p = 2i+1,
  \\
  \spn\{(AA^T)^{i+1}\bft_1,\ldots, (AA^T)^{i+1}\bft_n\}, & p=2i+2,
  \end{cases}
\end{equation}
are optimal for $\calA$. Moreover, we can apply \cref{thm:low-rank-1} to an orthonormal basis for any of the above subspaces $\XX_n^p$, $p\geq 1$, to obtain a best rank-$n$ approximation to $A$. Similarly for $\YY_n^p$ and \cref{thm:low-rank-2}.
We remark that the space $\XX_n^2$ in \cref{eq:optimal-seq-TP} is the second optimal space found by Melkman and Micchelli. 
\end{example}

\begin{example}
Let us compare the result of \cref{thm:optimal-seq} with the optimality criteria in \cref{sec:1-width}. For simplicity we consider the case $n=1$, $m=3$ and $A=\Sigma$. We further assume that the unit vector $\bfx_1$ is at the boundary of satisfying the optimality criteria in \cref{sec:1-width}. More precisely, we let $\bfx_1=\sum_{j=1}^3w_j\bfu_j$, and using \cref{ineq:optimality-3D}, we assume that
\begin{align*}
w_1^2 =  \frac{\sigma_1^2 - \sigma_2^2}{\sigma_1^2 - \sigma_3^2}, 
\quad w_2=0,
\quad w_3^2 = \frac{\sigma_2^2 - \sigma_3^2}{\sigma_1^2 - \sigma_3^2}.
\end{align*}
It then follows from \cref{thm:optimality_criterion_n=1} that $\spn\{\bfx_1\}$ is optimal for $\calA$.
Now, let $\bfy_1=A\bfx_1/\|A\bfx_1\|$. From \cref{thm:optimal-seq} we know that $\spn\{\bfy_1\}$ is also optimal for $\calA$. Moreover, if we let $\bfy_1=\sum_{j=1}^3z_j\bfu_j$, then $z_2=0$ and 
\begin{align*}
z_3^2 = \frac{\sigma_3^2w_3^2}{\sigma_1^2w_1^2+ \sigma_3^2w_3^2} = \frac{\sigma_2^2 - \sigma_3^2}{\sigma_1^2 - \sigma_3^2 + s}< \frac{\sigma_2^2 - \sigma_3^2}{\sigma_1^2 - \sigma_3^2} = w_3^2,
\end{align*}
where $s=(\sigma_1^2/\sigma_3^2-1)(\sigma_1^2-\sigma_2^2)> 0$. Thus, $\bfy_1$ is closer to the first singular vector (or in this case, eigenvector) $\bfu_1=\bfe_1$ than $\bfx_1$. We will look closer at this property in the next theorem.
\end{example}

Note that the definition of the spaces $\XX_n^p$ and $\YY_n^p$ in \cref{eq:seq} is very similar to the (block) power method for eigenvalue approximation. The following result, based on \cite[Theorem~7.1]{Sande:2019}, should therefore not come as a surprise for anyone familiar with this method.
\begin{theorem}\label{thm:eig}
Suppose $\XX_n^1$ is optimal for $\calA$ and $\YY_n^1$ is optimal for $\calA_T$. Let $P_{n,p}$ be the orthogonal projection onto $\XX_n^p$ and $\Pi_{n,p}$ be the orthogonal projection onto $\YY_n^p$. Then,
\begin{align*}
\|(I-P_{n,p})\bfu_j\|, \|(I-\Pi_{n,p})\bfv_j\| \leq \left(\frac{\sigma_{n+1}}{\sigma_j}\right)^p, \quad j=1,2,\ldots,n,
\end{align*}
and consequently, 
\begin{align*}
\XX_n^p \xrightarrow[p\to\infty]{}\spn\{\bfu_1,\ldots\bfu_n\}, \quad \YY_n^p \xrightarrow[p\to\infty]{}\spn\{\bfv_1,\ldots\bfv_n\}.
\end{align*}
\end{theorem}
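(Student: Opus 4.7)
The proof rests on the operator reformulation of optimality: a subspace with orthogonal projection $P$ is optimal for $\calA$ if and only if $\|(I-P)A\bfx\| \le \sigma_{n+1}\|\bfx\|$ for every $\bfx \in \RR^m$, which follows directly from \cref{eq:max0} combined with \cref{eq:width=sv}. The same statement holds with $A^T$ in place of $A$ for optimality relative to $\calA_T$. I would use this together with the tower structure $\XX_n^p = A(\YY_n^{p-1})$, $\YY_n^p = A^T(\XX_n^{p-1})$ from \cref{eq:seq} and the fact, established in \cref{thm:optimal-seq}, that all these subspaces are optimal.

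The core step is a cross recursion: for each $j = 1,\ldots,n$ and $p \ge 2$,
\begin{equation*}
\|(I - P_{n,p})\bfu_j\| \le \frac{\sigma_{n+1}}{\sigma_j}\,\|(I - \Pi_{n,p-1})\bfv_j\|,
\end{equation*}
with the symmetric inequality for $\|(I - \Pi_{n,p})\bfv_j\|$ in terms of $\|(I - P_{n,p-1})\bfu_j\|$. To derive it, observe that $A\Pi_{n,p-1}\bfv_j \in A(\YY_n^{p-1}) = \XX_n^p$, hence $(I - P_{n,p})A\Pi_{n,p-1}\bfv_j = 0$, so
\begin{equation*}
(I - P_{n,p})A\bfv_j \;=\; (I - P_{n,p})A(\bfv_j - \Pi_{n,p-1}\bfv_j).
\end{equation*}
Applying the operator bound for the optimal subspace $\XX_n^p$ to the right-hand side, and using $A\bfv_j = \sigma_j \bfu_j$, gives $\sigma_j\|(I - P_{n,p})\bfu_j\| \le \sigma_{n+1}\|(I - \Pi_{n,p-1})\bfv_j\|$.

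For the base case $p = 1$, I would apply the same operator bound directly with $\bfx = \bfv_j$ (and respectively $\bfx = \bfu_j$ for $A^T$), obtaining $\|(I - P_{n,1})\bfu_j\| \le \sigma_{n+1}/\sigma_j$ and $\|(I - \Pi_{n,1})\bfv_j\| \le \sigma_{n+1}/\sigma_j$. A routine induction that alternates the two halves of the cross recursion then delivers the advertised bound with exponent $p$. For the convergence, the standing assumption \cref{eq:sigmas-unique-weak} gives $\sigma_{n+1}/\sigma_j < 1$ for $j \le n$, so $P_{n,p}\bfu_j \to \bfu_j$ and $\Pi_{n,p}\bfv_j \to \bfv_j$; since $P_{n,p}\bfu_1,\ldots,P_{n,p}\bfu_n$ are asymptotically orthonormal, they become linearly independent for $p$ large, span $\XX_n^p$, and force convergence of $\XX_n^p$ to $\spn\{\bfu_1,\ldots,\bfu_n\}$ in, e.g., the gap metric.

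The main delicacy is not the recursion itself but dimension tracking: as remarked after \cref{thm:optimal-seq}, $\dim \XX_n^p$ could a priori drop below $n$. However, the operator bound and the identity $A\bfv_j = \sigma_j\bfu_j$ remain valid for any orthogonal projection onto an optimal subspace regardless of its dimension, so the induction is unaffected; the convergence argument then additionally shows that the dimension must stabilize at $n$ for all sufficiently large $p$.
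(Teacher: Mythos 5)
Your argument is correct, but it reaches the bound by a different route than the paper. The paper first proves \cref{lem:seq-sigma}, an operator-level statement about the auxiliary classes $\calA^p$, $\calA^p_T$ of \cref{eq:Ar}: using the annihilation $(I-P_{n,p})A\Pi_{n,p-1}=0$ together with submultiplicativity and $\|(I-P_{n,p})A\|=\sigma_{n+1}$, it shows $E(\calA^p,\XX_n^p)=E(\calA^p_T,\YY_n^p)=\sigma_{n+1}^p$, and then obtains the vector bound by writing $\bfu_j=\sigma_j^{-p}(AA^T)^i\bfu_j$ (respectively $\sigma_j^{-p}A(A^TA)^i\bfv_j$) and applying the operator estimate to this eigenvector of the power. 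You use exactly the same two ingredients --- the annihilation identity $(I-P_{n,p})A\Pi_{n,p-1}=0$ coming from $\XX_n^p=A(\YY_n^{p-1})$ in \cref{eq:seq}, and the bound $\|(I-P_{n,p})A\bfx\|\le\sigma_{n+1}\|\bfx\|$ valid for every optimal subspace by \cref{eq:max0}, \cref{eq:width=sv} and \cref{thm:optimal-seq} --- but you apply them vector by vector, to $(I-\Pi_{n,p-1})\bfv_j$, obtaining the cross recursion $\sigma_j\|(I-P_{n,p})\bfu_j\|\le\sigma_{n+1}\|(I-\Pi_{n,p-1})\bfv_j\|$ and its mirror image, and then conclude by an alternating induction with the correct base case at $p=1$. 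This bypasses \cref{lem:seq-sigma} and the classes $\calA^p$ entirely, so your argument is more elementary and self-contained; what it gives up is the additional information contained in the paper's lemma, namely that $\XX_n^p$ and $\YY_n^p$ are themselves optimal for the $n$-width of $\calA^p$ and $\calA^p_T$, a fact the paper highlights as being of independent interest. Your handling of the two loose ends --- that the operator bound survives a possible drop in $\dim\XX_n^p$, and that the ratio $\sigma_{n+1}/\sigma_j<1$ for $j\le n$ under \cref{eq:sigmas-unique-weak} forces $P_{n,p}\bfu_1,\ldots,P_{n,p}\bfu_n$ to become a basis of $\XX_n^p$ and yields convergence in the gap metric --- is in fact more explicit than the paper's one-word ``consequently''.
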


The above result follows from the next lemma and so we will postpone the proof.
To ease notation we define the two function classes $\calA^p$ and $\calA^p_T$, for $p\geq 1$, by $\calA^1:=\calA$, $\calA^1_T:=\calA_T$ and
\begin{equation}\label{eq:Ar}
\calA^p:=A(\calA^{p-1}_T),\quad \calA^p_T:=A^T(\calA^{p-1}),
\end{equation}
for $p\geq 2 $.
Using an argument similar to the proofs of \cite[Lemma~1]{Floater:2018} and \cite[Lemma~2]{Sande:2020} we have the following result.
\begin{lemma}\label{lem:seq-sigma}
If $\XX_n^1$ is optimal for $\calA$ and $\YY_n^1$ is optimal for $\calA_T$, then
\begin{align*}
 E(\calA^p, \XX_n^p) =  E(\calA^p_T, \YY_n^p) = (\sigma_{n+1})^p.
\end{align*}
\end{lemma}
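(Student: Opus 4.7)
The plan is to induct on $p$. The base case $p=1$ is immediate from the hypotheses and \cref{eq:width=sv}: since $\XX_n^1$ is optimal, $E(\calA,\XX_n^1) = d_n(\calA) = \sigma_{n+1}$, and similarly $E(\calA_T,\YY_n^1) = \sigma_{n+1}$.

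For the inductive step, assume both equalities hold at level $p-1$, and let $Q_p$ and $R_{p-1}$ denote the orthogonal projections onto $\XX_n^p$ and $\YY_n^{p-1}$. Since $\XX_n^p = A(\YY_n^{p-1})$, the image of $AR_{p-1}$ lies in $\XX_n^p$, so $(I-Q_p)AR_{p-1} = 0$, which rearranges to the identity
\begin{equation*}
(I-Q_p)A = (I-Q_p)A(I-R_{p-1}).
\end{equation*}
Any element of $\calA^p$ is $A\bfb$ for some $\bfb \in \calA_T^{p-1}$, and $\dist(A\bfb,\XX_n^p) = \|(I-Q_p)A\bfb\|$ since $Q_p$ is the orthogonal projection onto $\XX_n^p$. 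Substituting the identity and then applying the bound $\|M\bfy\| \le \|M\|\|\bfy\|$ with $\bfy = (I-R_{p-1})\bfb$ gives
\begin{equation*}
\|(I-Q_p)A\bfb\| = \|(I-Q_p)A(I-R_{p-1})\bfb\| \le \|(I-Q_p)A\| \cdot \|(I-R_{p-1})\bfb\|.
\end{equation*}
By \cref{eq:max0} the first factor is $E(\calA,\XX_n^p)$, which equals $\sigma_{n+1}$ since $\XX_n^p$ is optimal for $\calA$ by \cref{thm:optimal-seq}. The second factor is $\dist(\bfb,\YY_n^{p-1}) \le E(\calA_T^{p-1},\YY_n^{p-1}) = \sigma_{n+1}^{p-1}$ by the inductive hypothesis. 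Maximizing over $\bfb$ yields $E(\calA^p,\XX_n^p) \le \sigma_{n+1}^p$. The matching lower bound $E(\calA^p,\XX_n^p) \ge d_n(\calA^p) = \sigma_{n+1}^p$ follows from the derivation of \cref{eq:width=sv} applied to an operator that realizes $\calA^p$ as the image of the unit ball; such an operator has singular values $\sigma_i^p$. The equality for $\YY_n^p$ is obtained by the fully symmetric argument, interchanging $A \leftrightarrow A^T$, $\XX \leftrightarrow \YY$, and $Q \leftrightarrow R$.

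The heart of the proof is the identity $(I-Q_p)A = (I-Q_p)A(I-R_{p-1})$. Inserting $I-R_{p-1}$ is free on the left because it removes only the part annihilated by $I-Q_p$, but it allows the vector $(I-R_{p-1})\bfb$ on the right to inherit the small inductive bound $\sigma_{n+1}^{p-1}$ rather than the crude $\|\bfb\| \le \sigma_1^{p-1}$ that a naive application of $\|(I-Q_p)A\bfb\| \le \|(I-Q_p)A\|\|\bfb\|$ would produce. Once this trick is spotted, no substantial obstacles remain.
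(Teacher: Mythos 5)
Your proof is correct and follows essentially the same route as the paper: the key identity $(I-P_{n,p})A\,\Pi_{n,p-1}=0$, the factorization of the norm using optimality of $\XX_n^p$ from \cref{thm:optimal-seq} together with the inductive hypothesis, and induction on $p$. The only cosmetic differences are that you argue pointwise over $\bfb\in\calA_T^{p-1}$ instead of writing $\calA_T^{p-1}$ as the image of an explicit operator $B$, and you spell out the lower bound $E(\calA^p,\XX_n^p)\ge d_n(\calA^p)=(\sigma_{n+1})^p$, which the paper leaves implicit.
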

\begin{proof}
 Let $P_{n,p}$ be the orthogonal projection onto $\XX_n^p$ and $\Pi_{n,p}$ be the orthogonal projection onto $\YY_n^p$. Then, the matrix
 \begin{align*}
 (I-P_{n,p})A\Pi_{n,p-1} = 0,
 \end{align*}
since $A\Pi_{n,p-1}\bfx\in \XX_n^p$ for any vector $\bfx\in\RR^m$. If we now let the matrix $B$ be defined by $B:=A^T(AA^T)^i$ for $p=2i+2$ and $B:=(A^TA)^i$ for $p=2i+1$, then
 \begin{align*}
 E(\calA^p, \XX_n^p)  &= \|(I-P_{n,p})AB\| =  \|(I-P_{n,p})A(I-\Pi_{n,p-1})B\|
 \\
 &\leq  \|(I-P_{n,p})A\|\,\|(I-\Pi_{n,p-1})B\| = \sigma_{n+1}\,E(\calA_T^{p-1}, \YY_n^{p-1}),
 \end{align*}
 since $\XX_n^p$ is optimal for $\calA$ by \cref{thm:optimal-seq}. By a similar argument we have 
 \begin{align*}
  E(\calA_T^p, \YY_n^p)  = \sigma_{n+1}\,E(\calA^{p-1}, \XX_n^{p-1}),
 \end{align*}
 and the result follows from induction on $p$. 
\end{proof}

From the definitions of $\calA^p$ and $\calA^p_T$ in \cref{eq:Ar} we deduce that $d_n(\calA^p)=d_n(\calA^p)=(\sigma_{n+1})^p$. It thus follows from \cref{lem:seq-sigma} that if $\XX_n^1$ is optimal for $\calA$ and $\YY_n^1$ is optimal for $\calA_T$ then  $\XX_n^p$ is optimal for $\calA^p$ and $\YY_n^p$ is optimal for $\calA_T^p$. In fact, using the arguments of \cite[Section~4]{Floater:2018} one can show that if $\XX_n^1$ is optimal for $\calA$ and $\YY_n^1$ is optimal for $\calA_T$ then  $\XX_n^p$ is optimal for $\calA^s$ and $\YY_n^p$ is optimal for $\calA_T^s$ for all $p\geq s\geq 1$.

\begin{proof}[Proof of \cref{thm:eig}]
The two cases are analogous and so we only consider the case $\|(I-P_{n,p})\bfu_j\|$. Using the definition of the spectral norm and \cref{lem:seq-sigma} we have
\begin{equation}\label{ineq:optimal}
\begin{aligned}
\|(I-P_{n,p})(AA^T)^i\bfx\| &\leq \|(I-P_{n,p})(AA^T)^i\|\,\|\bfx\| \\
&= E(\calA^p, \XX_n^p) = (\sigma_{n+1})^p, &&\quad p=2i,
\\
\|(I-P_{n,p})A(A^TA)^i\bfx\| &\leq \|(I-P_{n,p})A(A^TA)^i\|\,\|\bfx\| \\
&= E(\calA^p, \XX_n^p)  = (\sigma_{n+1})^p, &&\quad p=2i+1,
\end{aligned}
\end{equation}
for any unit vector $\bfx\in \RR^m$. We first consider $p=2i$. Then, for any $j=1,\ldots,n$ we have 
\begin{align*}
\|(I-P_{n,p})\bfu_j\| = \|(I-P_{n,p})\frac{1}{\sigma_j^p}(AA^T)^i\bfu_j\| = \frac{1}{\sigma_j^p}\|(I-P_{n,p})(AA^T)^i\bfu_j\|,
\end{align*}
and by letting $\bfx=\bfu_j$ in \cref{ineq:optimal} we obtain
\begin{align*}
\|(I-P_{n,p})\bfu_j\| \leq  \left(\frac{\sigma_{n+1}}{\sigma_j}\right)^p.
\end{align*}
A similar argument proves the case $p=2i+1$.
\end{proof}

\section{Conclusions} \label{sec:conclusion}
We have addressed the problem of best rank-$n$ approximations to a given matrix $A$ in the spectral norm, and
we have shown that the problem can be related to the concept of Kolmogorov $n$-widths and corresponding optimal spaces.
More precisely, any orthonormal basis in an optimal $n$-dimensional  space for the image of the Euclidean unit ball under $A$ generates a best rank-$n$ approximation to $A$. This results in a variety of best low-rank approximations that are different from the truncated SVD.

In this perspective, we have laid out explicit characterizations of optimal subspaces of any dimension, and presented a complete description of all the optimal one-dimensional subspaces. Furthermore, we have provided a simple construction to obtain a sequence of optimal $n$-dimensional subspaces once an initial optimal subspace is known.

The paper features an explicit theoretical contribution. The task to retrieve useful information while maintaining the underlying physical feasibility often necessitates the search for low-rank approximations with/without specific properties/structures of the data matrix \cite{Antoulas:1997,Chu:2003,Higham:1989,Markovsky:2008,Ottaviani:2014}. In this context, the results we have presented may also have a practical impact. However, we have not considered here the problem of finding efficient algorithms to compute our approximations. We note, on the other hand, that in the special case of Hankel matrices such algorithms have been considered in \cite{Knirsch:preprint}.

\section*{Acknowledgements}
This work was supported by the Beyond Borders Programme of the University of Rome
 Tor Vergata through the project ASTRID (CUP E84I19002250005) and by the MIUR Excellence
Department Project awarded to the Department of Mathematics, University of
 Rome Tor Vergata (CUP E83C18000100006). C. Manni, E. Sande and H. Speleers are members of
Gruppo Nazionale per il Calcolo Scientifico, Istituto Nazionale di Alta Matematica.


\bibliography{nwidths}

\end{document}